\spnewtheorem{assumption}{Assumption}{\bf}{\it}
\newcommand{\R}{\mathbb{R}}
\newcommand\tilc{{\widetilde{c}}}
\newcommand\tilg{{\widetilde{\gamma}}}
\begin{document}

\title*{A Numerical Method to solve Optimal Transport Problems with Coulomb Cost} 
\author{Jean-David  Benamou \and Guillaume  Carlier \and Luca Nenna}
\institute{Jean-David Benamou, Luca Nenna \at INRIA, MOKAPLAN, Domaine de Voluceau Le Chesnay, FRANCE, \email{jean-david.benamou/luca.nenna@inria.fr}
\and Guillaume Carlier \at CEREMADE, Universit\'e Paris Dauphine \email{carlier@ceremade.dauphine.fr}}
%
%

\maketitle

\abstract{
In this paper, we present a numerical method, based on iterative Bregman projections, to solve the optimal transport problem with Coulomb cost.
This  is related to the strong interaction limit of Density Functional Theory.
The first idea is to introduce an entropic regularization of the Kantorovich formulation of the Optimal Transport problem.
The regularized problem then corresponds to the projection of a vector on the intersection of the constraints with respect to the Kullback-Leibler distance.
Iterative Bregman projections on each marginal constraint are explicit which enables us to approximate the optimal transport plan. We validate the numerical method against   analytical test cases.}


\section{Introduction}

\subsection{On Density functional theory}

Quantum mechanics for a molecule with $N$ electrons boils down to the many-electron Schr\"{o}dinger equation for a  wave function $\psi\in\ L^{2}(\mathbb{R}^{3N};\mathbb{C})$ (in this paper, we neglect the spin variable).
The  limit  of this approach is computational~: in order to predict the chemical behaviour of $H_{2}O$ ($10$ electrons) using a $10$ gridpoints discretization of $\mathbb{R}$, we need to  solve
the Schr\"{o}dinger equation on $10^{30}$ gridpoints. This is why Hohenberg, Kohn and Sham introduced, in \cite{HK} and \cite{KS}, the Density Functional Theory (DFT) as an approximate computational method for solving the Schr\"{o}dinger equation  at a more reasonable cost.

The main idea of the DFT is to compute only the marginal density for one electron
\[\rho(x_{1})=\int \gamma_{N}(x_1, x_2 \cdots, x_N)  dx_{2}\cdots dx_{N},\]
 where $\gamma_{N}=\lvert\psi(x_{1},\cdots,x_{N})\rvert^{2}$ is the joint probability density of electrons at positions
$x_{1},\cdots,x_{N}\in\mathbb{R}^{3}$, instead of the full wave function $\psi$.
One  scenario of interest for the DFT is when the repulsion between the electrons largely dominates over the kinetic energy. In this case, the problem can, at least formally, be reformulated as an Optimal Transport  (OT) problem as emphasized in the pioneering works of Buttazzo, De Pascale and Gori-Giorgi \cite{ButDeGo} and Cotar, Friesecke and Kl\"uppelberg \cite{CoFrieKl}.


\subsection{Optimal Transport}
\label{OT}

Before discussing the link between  DFT and OT, let us recall the standard optimal transport problem  and its
extension to the multi-marginal framework.  Given two probability distributions $\mu$ and $\nu$ (on $\R^d$, say) and a transport cost $c$: $\R^d\times \R^d\to \R$, the optimal transport problem consists in finding the cheapest way to transport $\mu$ to $\nu$ for the cost $c$. A transport map between $\mu$ and $\nu$ is a Borel map $T$ such that $T_\#\mu=\nu$ i.e. $\nu(A)=\mu(T^{-1}(A))$ for every Borel subset $A$ of $\R^d$. The Monge problem (which dates back to 1781 when Monge \cite{Monge} posed the problem of finding the optimal way to move a pile of dirt to a hole of the same volume) then reads
\begin{equation}
\label{Mo}
 \min_{T_\# \mu=\nu} \int_{\R^d} c(x,T(x))\mu(dx).
\end{equation}
This is a delicate problem since the mass conservation constraint $T_\# \mu=\nu$ is highly nonlinear (and the feasible set may even be empty for instance if $\mu$ is a Dirac mass and $\nu$ is not). This is why, in 1942, Kantorovich \cite{Kanth42} proposed a relaxed formulation of (\ref{Mo}) which allows mass splitting
\begin{equation}
\label{K}
 \min_{\gamma\in\Pi(\mu,\nu)}\int_{\R^d\times\R^d} c(x,y)\gamma(dx,dy)
\end{equation}
where $\gamma\in\Pi(\mu,\nu)$ consists of all probability measures on $\R^d\times \R^d$ having $\mu$ and $\nu$ as marginals, that is:
\begin{alignat}{3}
 \gamma(A\times\mathbb{R^d})&=\mu(A),\quad\forall A \mbox{ Borel subset of } \R^d,\\
 \gamma(\mathbb{R^d}\times B)&=\nu(B),\quad\forall B   \mbox{ Borel subset of }\R^d.
\end{alignat}
Note that this is a linear programming problem and that there exists solutions under very mild assumptions (e.g. $c$ continuous and $\mu$ and $\nu$ compactly supported).
A minimizing $\gamma$ in (\ref{K}) is called an optimal transport plan and it gives the probability that a mass element in $x$ be transported in $y$. Let us remark that if $T$ is a transport map then it induces a transport plan $\gamma_{T}(x,y):=\mu(x)\delta(y-T(x))$ so if an optimal plan of (\ref{K}) has the form $\gamma_{T}$ (which means that no splitting of mass occurs and $\gamma$ is concentrated on the graph of $T$) then $T$ is actually an optimal transport map i.e. a solution to (\ref{Mo}).
The linear problem (\ref{K})  also has a convenient dual formulation
\begin{equation}
 \max_{u,v\lvert u(x)+v(y)\leq c(x,y)}\int_{\R^d}u(x)\mu(dx)+\int_{\mathbb{R}^d}v(y)\nu(dy)
\end{equation}
where $u(x)$ and $v(y)$ are the so-called Kantorovich potentials. OT theory for two marginals has developed very rapidly in the 25 last years, there are well known conditions on $c$, $\mu$ and $\nu$ which guarantee that there is a unique optimal plan which is in fact induced by a map (e.g. $c=\vert x-y\vert^2$ and $\mu$ absolutely continuous, see Brenier \cite{bre}) and we refer to the textbooks of Villani \cite{Villani03, Villani09} for a detailed exposition.

\smallskip

Let us now consider the so-called multi-marginal problems i.e. OT problems involving $N$ marginals $\mu_1, \cdots, \mu_N$ and a cost $c$ : $\R^{dN}\to \R$, which leads to the following generalization of (\ref{K}) 
\begin{equation}
 \label{MMKP}
 \min_{\gamma\in\Pi(\mu_{1},\cdots,\mu_{N})}\int_{\mathbb{R^{d\times N}}} c(x_{1},\cdots,x_{N})\gamma(dx_{1},\cdots,dx_{N})
\end{equation}
where $\Pi(\mu_{1},\cdots,\mu_{N})$ is the set of probability measures on $(\R^d)^N$ having $\mu_1, \cdots, \mu_N$ as marginals. The corresponding Monge problem then becomes
\begin{equation}
 \label{MMP}
 \min_{{T_i}_\#\mu_1=\mu_i, \; i=2, \cdots, N}  \int_{\R^d} c(x_1,T_{2}(x_1),\cdots,T_{N}(x_1))\mu_{1}(dx_1).
\end{equation}
Such multi-marginals problems first appeared in the work of Gangbo and  {\'S}wi{\c{e}}ch \cite{gansw} who solved the quadratic cost  case and proved the existence of Monge solutions. In recent years, there has been a lot of interest in such multi-marginal problems because they arise naturally in many different settings such as economics \cite{carlierekeland}, \cite{Pass-match}, polar factorization of vector fields and theory of monotone maps \cite{ghou-mau} and, of course, DFT \cite{ButDeGo, CoFrieKl, CPM,friesecke2013n,mendl2013kantorovich,cotar2013infinite},  as is recalled below. Few results are known about the structure of optimal plans  for (\ref{MMP}) apart from the general results of Brendan Pass \cite{pass}, in particular the case of \emph{repulsive costs} such as the Coulomb's cost from DFT is an open problem.


\smallskip

The paper is structured as follows.
In Section \ref{DFTandOT}, we recall the link between  Density Functional Theory and Optimal Transportation and we present some analytical solutions of the OT problem (e.g. optimal maps for radially symmetric marginals, for 2 electrons). In Section \ref{AlProj}, we introduce a numerical method, based on iterative Bregman projections, and an algorithm which aims at refining the mesh where the transport plan is concentrated. In section \ref{NumRes} we present some numerical results.  Section \ref{ccl} concludes.

\section{From Density Functional Theory to Optimal Transportation}
\label{DFTandOT}

\subsection{Optimal Transportation with Coulomb cost }


In Density Functional Theory \cite{HK} the ground state energy of a system (with $N$ electrons) is obtained by minimizing the following functional w.r.t. the electron density $\rho(x)$:
\begin{equation}
 \label{eq4}
 E[\rho]=\min_{\rho\in\mathcal{R}}F_{HK}[\rho]+\int v_{ext}(x)\rho(x)dr
\end{equation}
where $\mathcal{R}=\{ \rho:\mathbb{R}^{3}\rightarrow\mathbb{R} \lvert \rho\geq 0,\sqrt{\rho}\in H^{1}(\mathbb{R}^{3}),\int_{\mathbb{R}^{3}}\rho(x)dx=N\}$,

$v_{ext}:=-\dfrac{Z}{\lvert x-R\rvert}$ is the electron-nuclei potential ($Z$ and $R$ are the charge and the position of the nucleus, respectively)  and $F_{HK}$ is the so-called Hohenberg-Kohn which is defined by minimizing over all wave functions $\psi$ which yield $\rho$: 

\begin{equation}
 \label{eq5}
 F_{HK}[\rho]=\min_{\psi \rightarrow \rho} \hbar^{2}T[\psi]+V_{ee}[\psi]
\end{equation}

where 
$\hbar^{2}$ is a semiclassical constant factor,

\begin{center}
$T[\psi]=\dfrac{1}{2}\int\cdots\int\sum_{i=1}^{N}\lvert \nabla_{x_{i}} \psi\rvert^{2}dx_{1}\cdots dx_{N}$ 
\end{center}
is the kinetic energy   and 
\begin{center}
$V_{ee}= \int\cdots\int\ \sum_{i=1}^{N}\sum_{j>i}^{N}\dfrac{1}{\lvert x_{i}-x_{j}\rvert} \lvert \psi \rvert^{2}  dx_{1}\cdots dx_{N}$ 
\end{center}
is the Coulomb repulsive energy operator.

Let us now consider the  \textit{Semiclassical} limit
\begin{center}
$\lim_{\hbar\rightarrow 0} \min_{\psi \rightarrow \rho} \hbar^{2}T[\psi]+V_{ee}[\psi] $
\end{center}
and  assume that taking the minimum over $\psi$ commutes with passing to the limit $\hbar\rightarrow 0$ (Cotar, Friesecke and Kl\"{u}ppelberg in \cite{CoFrieKl} proved it for $N=2$),
we obtain the following functional
\begin{equation}
 \label{sce}
V_{ee}^{SCE}[\rho]=\min_{\psi\rightarrow \rho}\int\cdots\int\ \sum_{i=1}^{N}\sum_{j>i}^{N}\dfrac{1}{\lvert x_{i}-x_{j}\rvert} \lvert \psi \rvert^{2}  dx_{1}\cdots dx_{N}
\end{equation}
where $V_{ee}^{SCE}$ is the minimal Coulomb repulsive energy whose minimizer characterizes the state of \textit{Strictly Correlated Electrons}(SCE). 

Problem (\ref{sce})  gives rise to a multi-marginal optimal transport problem as (\ref{MMKP}) by considering that

 \begin{itemize}
 \item according to the indistinguishability of electrons, all the marginals are equal to $\rho$,

 \item  the cost function is given the electron-electron Coulomb repulsion,
  \begin{equation}
 \label{eq7}
  c(x_{1},...,x_{N})=\sum_{i=1}^{N}\sum_{j>i}^{N}\dfrac{1}{\lvert x_{i}-x_{j}\rvert},
 \end{equation}
 
 \item we refer to $\gamma_{N}=\lvert\psi(x_{1},\cdots,x_{N})\rvert^{2}$  (which is the joint probability density of electrons at positions $x_{1},\cdots,x_{N}\in\mathbb{R}^{3}$) as the transport plan. 
\end{itemize}
The Coulomb cost function (\ref{eq7}) is different from the  costs usually considered in OT as it is not bounded at the origin and it decreases with distance. So it requires a generalized
formal framework, but this is beyond the purpose of this work (see \cite{ButDeGo} and \cite{CoFrieKl}).
Finally  (\ref{sce}) can be re-formulated as a Kantorovich problem 
\begin{equation}
\label{kant}
V_{ee}^{SCE}[\rho]=\min_{\pi_{i}(\gamma_{N})=\rho,i=1,\cdots,N}\int_{\mathbb{R}^{3N}}c(x_{1},\cdots,x_{N})\gamma_{N}(x_{1},\cdots,x_{N})dx_{1}\cdots dx_{N}
\end{equation}
where
\begin{center}
$\pi_{i}(\gamma_{N})=\int_{\mathbb{R}^{3(N-1)}}\gamma_{N}(x_{1},\cdots,x_{i},\cdots,x_{N})dx_{1},\cdots,dx_{i-1},dx_{i+1},\cdots,dx_{N}$ 
\end{center}
is the $i-$th marginal.
As mentioned in section \ref{OT} if the optimal transport plan $\gamma_{N}$ has the following form 

\begin{equation}
 \gamma_{N}(x_{1},\cdots,x_{N})=\rho(x_{1}) \delta(x_{2}-f_{2}^\star(x_{1}))\cdots\delta(x_{N}-f_{N}^\star(x_{1}))
\end{equation}
then the functions $f_{i}^\star:\mathbb{R}^{3}\rightarrow\mathbb{R}^{3}$ are the optimal transport maps (or \textit{co-motion} functions) of the Monge problem 
\begin{equation}
\begin{split}
 \label{eq6}
 & V_{ee}^{SCE}[\rho]=\min_{\{f_{i}:\mathbb{R}^{3}\rightarrow\mathbb{R}^{3}\}_{i=1}^{N}}\int\sum_{i=1}^{N}\sum_{j>i}^{N}\dfrac{1}{\lvert f_{i}(x)-f_{j}(x)\rvert}\rho(x) dx \\ & s.t.\quad {f_i}_\#\rho=\rho, \; i=2, ...,N, \quad f_{1}(x)=x.
\end{split}
 \end{equation}
 
\begin{remark}{(Physical meaning of the co-motion function)}
$f_{i}(x)$  determine the position of the $i$-th electron in terms of $x$ which
is the position of the \textquotedblleft 1st\textquotedblright electron~: $V_{ee}^{SCE}$ defines a system with the maximum possible correlation between the relative electronic positions.
 
 \end{remark}
 
In full generality,  problem (\ref{eq6}) is  delicate and  proving the existence of the co-motion functions is difficult. 
However,  the co-motion functions can be obtained via semianalytic formulations for spherically symmetric atoms and strictly 1D systems (see \cite{CoFrieKl}, \cite{SeidlGoriSavin}, \cite{MaletGori}, \cite{CPM})
and we will give some examples in the following section.

Problem (\ref{kant}) admits a useful dual formulation in which the so called Kantorovich potential $u$ plays a central role
\begin{equation}
\label{eq8}
V_{ee}^{SCE}=\max_{u}\{N\int u(x)\rho(x)dx\quad s.t.\quad \sum_{i=1}^{N}u(x_{i})\leq c(x_{1},...,x_{N})\}.
\end{equation}
Because $c$ is invariant by permutation, there is a single dual Kantorovich potential for all all marginal constraints. 
Moreover, this potential $u(x)$ is related to the co-motion functions via the classical equilibrium equation (see \cite{SeidlGoriSavin})
\begin{equation}
\label{equi}
\nabla u(x)=-\sum_{i=2}^{N}\dfrac{x-f_{i}(x)}{\lvert x -f_{i}(x)\rvert^{3}}.
\end{equation}
\begin{remark}{(Physical meaning of (\ref{equi}))}
The gradient of the Kantorovich potential equals the total net force exerted on the electron in $x$ by electrons in $f_{2}(x),\cdots,f_{N}(x)$.
\end{remark}


\subsection{Analytical Examples}\label{anal}
\subsubsection{The case $N=2$ and $d=1$}
In order to better understand the problem we have formulated in the previous section, we recall some analytical examples (see \cite{ButDeGo} for the details).

Let us consider 2 particles in one dimension  and  marginals

\begin{equation}
 \label{eq9}
 \rho_{1}(x)=\rho_{2}(x)=\begin{cases} a\quad if\lvert x\rvert \leq a/2 \\ 0\quad otherwise.\end{cases}
\end{equation}

After a few computations, we obtain the following associated co-motion function

\begin{equation}
 f(x)=\begin{cases} x+\frac{a}{2} \\ x-\frac{a}{2} \end{cases}.
\end{equation}

If we take

\begin{equation}
 \label{eq10}
 \rho_{1}(x)=\rho_{2}(x)=\dfrac{a-\lvert x\rvert}{a^{2}}\quad defined\quad in \quad [-a,a],
\end{equation}

we get

\begin{equation}
 f(x)=\dfrac{x}{\lvert x\rvert}(\sqrt{2a\lvert x\rvert-x^{2}}-a)\quad on\quad [-a,a]
\end{equation}

Figure \ref{figure:fig4} shows the co-motion functions for (\ref{eq9}) and (\ref{eq10}).

\begin{figure}[h!]
\begin{tabular}{@{}c@{\hspace{1mm}}c@{}}
\centering
\includegraphics[ scale=0.35]{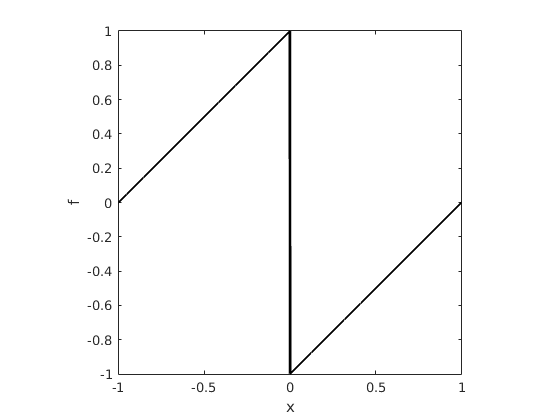} &
\includegraphics[ scale=0.35]{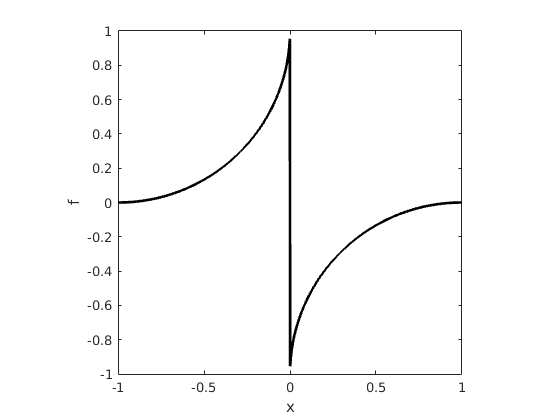}
\end{tabular}
\caption{\textit{Right: Co-motion function for (\ref{eq9}) with  $a=2$.  \textit{Left}: Co-motion function for (\ref{eq10}) with $a=1$.}}
\label{figure:fig4}
\end{figure}

\subsubsection{The case $N>2$ and $d=1$}
\label{multi1D}
In \cite{CPM}, the authors proved the existence of optimal transport maps for  problem (\ref{eq6}) in
dimension $d=1$ and provided an explicit construction of the optimal maps. Let $\rho$ be the normalized electron density and $-\infty=x_{0}<x_{1}<\cdots<x_{N}=+\infty$ be such that 
\begin{center}
 $\int_{x_{i}}^{x_{i+1}}\rho(x)dx=1/N$ $\forall i=0,\cdots,N-1$.
\end{center}
Thus, there exists a unique increasing function $\tilde{f}:\mathbb{R}\rightarrow\mathbb{R}$ on each interval $[x_{i},x_{i+1}]$ such that for every test-function $\varphi$ one has
\begin{alignat}{3}
\int_{[x_{i},x_{i+1}]}  \varphi( \tilde{f}(x))  \rho(x)dx&=\int_{[x_{i+1},x_{i+2}]} \varphi(x) \rho(x)dx\quad\forall i=0,\cdots,N-2, \\ 
\int_{[x_{N-1},x_{N}]} \varphi( \tilde{f}(x))\rho(x)   dx&=\int_{[x_{0},x_{1}]}   \varphi(x)  \rho(x)dx,
\end{alignat}
The optimal maps are then given by 
\begin{alignat}{3}
f_{2}(x)&=\tilde{f}(x)\\
f_{i}(x)&=f_{2}^{(i)}(x)\quad \forall i=2,\cdots,N,
\end{alignat}
where $f_{2}^{(i)}$ stands for the $i-$th composition of $f_2$ with itself. Here, we present an example given in \cite{ButDeGo}. We consider the case where $\rho$ is the Lebesgue measure on the unit interval $I=[0,1]$,  the construction above gives the following optimal co-motion functions

\begin{equation}
\label{eq11}
\begin{array}{l}
\mbox{ $f_{2}(x)=\begin{cases} x+1/3\quad if\quad x\leq 2/3 \\ x-2/3\quad if\quad x>2/3\end{cases}$, }\\
 \mbox{$f_{3}(x)=f_2(f_2(x))=\begin{cases} x+2/3\quad if\quad x\leq 1/3 \\ x-1/3\quad if\quad x>1/3\end{cases}$.}
 \end{array} 
\end{equation}

Furthermore, we know that the Kantorovich potential $u$ satisfies the relation (here we take $N=3$)

\begin{equation}
\label{eq12}
  u'(x)=-\sum_{i=2}^{N}\dfrac{x-f_{i}(x)}{\lvert x-f_{i}(x)\rvert^{3}}
\end{equation}
and by substituting the co-motion functions in (\ref{eq12}) (and integrating it) we get
\begin{equation}
 \label{13}
 u(x)=\begin{cases} \frac{45}{4}x\quad\quad & 0\leq x\leq1/3 \\ \frac{15}{4}\quad\quad & 1/3\leq x\leq 2/3 \\ -\frac{45}{4}x+\frac{45}{4}\quad\quad & 2/3\leq x\leq 1\end{cases}
\end{equation}

Figure  \ref{figure:fig6}  illustrates this example. \\

When $N\geq 4$ similar arguments as above can be developed and we can similarly compute the co-motion functions and the Kantorovich potential.

\begin{figure}[htbp]
\begin{tabular}{@{}c@{\hspace{1mm}}c@{\hspace{1mm}}c@{}}
\centering
\includegraphics[ scale=0.14]{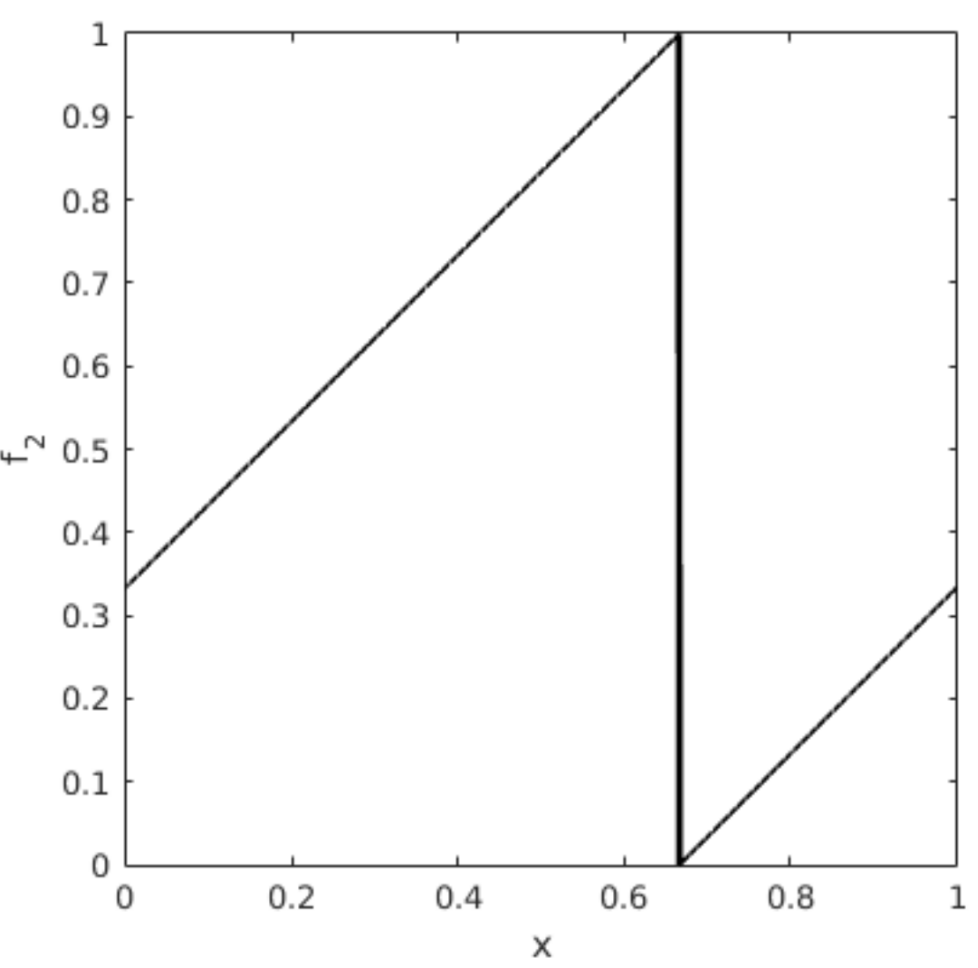} &
\includegraphics[ scale=0.14]{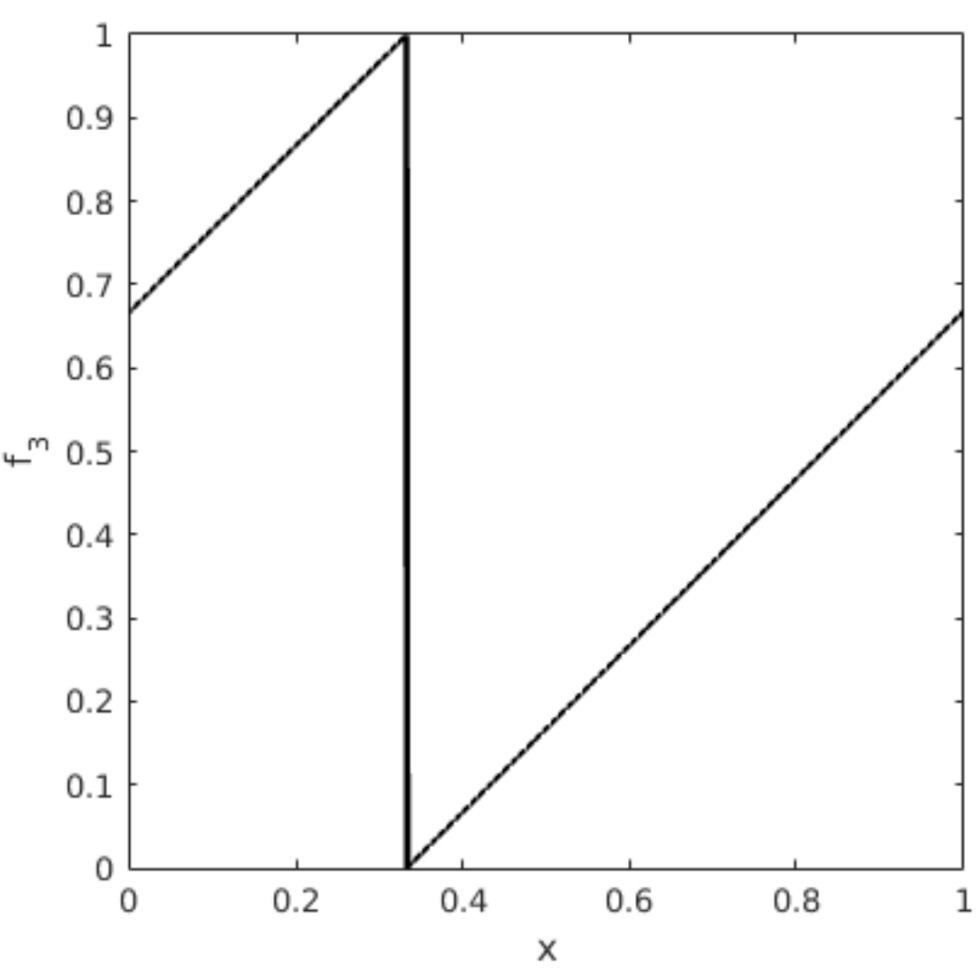}&
\includegraphics[ scale=0.123]{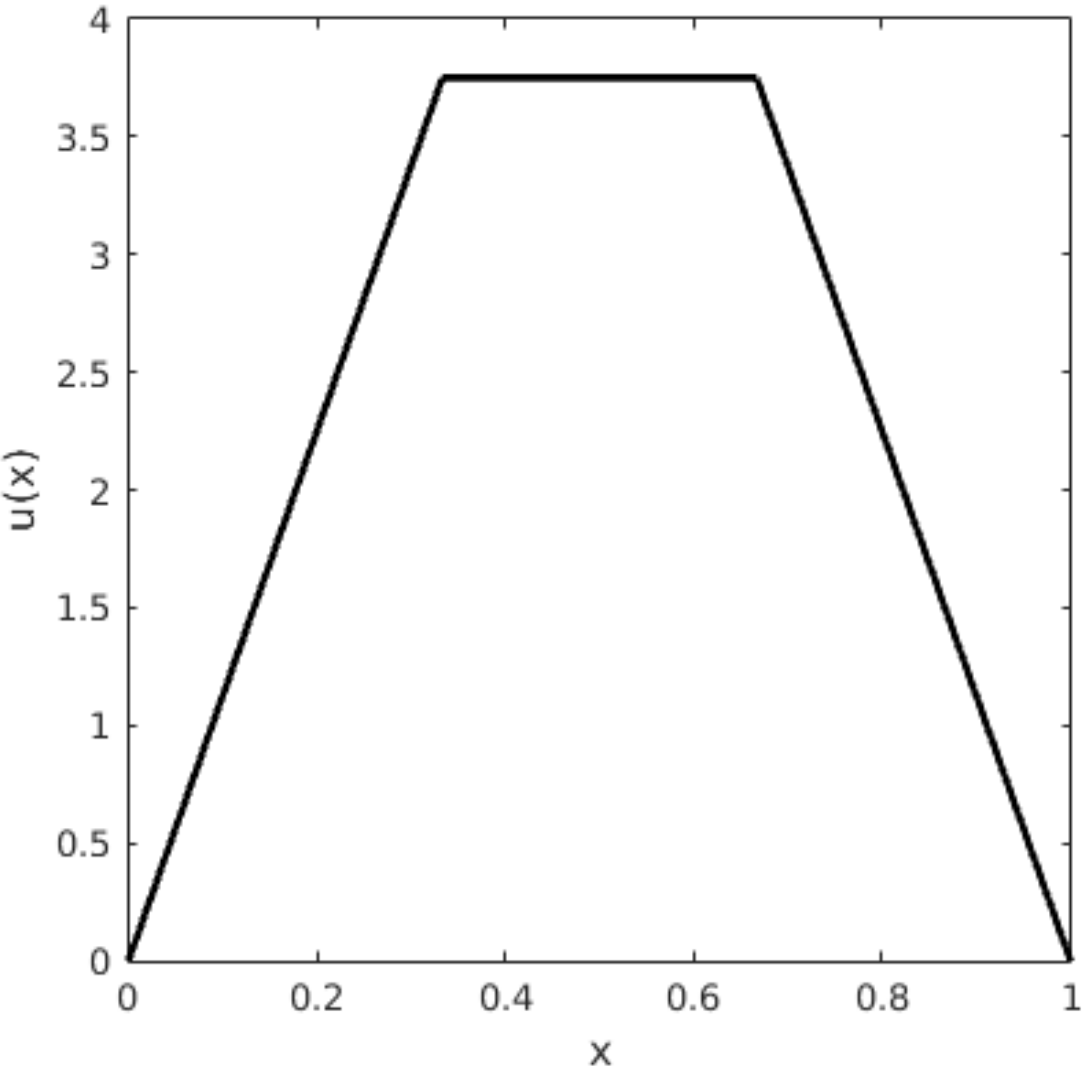} \\

\end{tabular}

\caption{\textit{Right: co-motion function $f_{2}$ for (\ref{eq11}). Center: co-motion function $f_{3}$ for (\ref{eq11}).
Left: Kantorovich Potential $u(x)$  (\ref{13}).}}
\label{figure:fig6}
\end{figure}


\subsubsection{The radially symmetric marginal case for $N=2$,  $d\geq 2$}
\label{radial}
We discuss now the radial $d-$dimensional ($d\geq 2$) case for $N=2$. We assume that the marginal $\rho$ is radially symmetric, then we recall the following theorem from \cite{CoFrieKl}:

\begin{theorem}{\cite{CoFrieKl}}
 Suppose that $\rho(x)=\rho(\lvert x\rvert)$, then the optimal transport map  is given by
 \begin{equation}
 f^{\star}(x)=\dfrac{x}{\lvert x\rvert}g(\vert x\vert),\quad x\in\R^d,
 \end{equation}
 with $g(r)=-F_{2}^{-1}(F_{1}(r))$, $F_{1}(t):=C(d)\int_{0}^{t}\rho(s)s^{d-1}ds$, $F_{2}(t):=C(d)\int_{t}^{\infty}\rho(s)s^{d-1}ds$ where $C(d)$ denotes the measure of $S^{d-1}$, the unit sphere in $\R^d$.

\end{theorem}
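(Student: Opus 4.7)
The idea is to use the $O(d)$-invariance of both the cost $c(x,y)=1/|x-y|$ and the marginal $\rho$ to reduce the problem to a one-dimensional rearrangement in the radial variable, and then to invoke classical one-dimensional optimal transport.

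\emph{Symmetrization and structure of the map.} I would first argue that the problem admits an $O(d)$-equivariant optimal transport map. Since $c$ is rotation invariant and $\rho$ is radially symmetric, averaging any optimal Kantorovich plan $\gamma$ over the diagonal action of $O(d)$ produces a plan with the same marginals and the same total cost, which is therefore still optimal. For a map $T:\R^d\to\R^d$ satisfying $T(Rx)=RT(x)$ for all $R\in O(d)$, the stabilizer of a point $x\neq 0$ is $O(d-1)$ and fixes only the line through $x$; hence $T(x)$ must lie on that line, i.e. $T(x)=\frac{x}{|x|}g(|x|)$ for some scalar function $g:(0,\infty)\to\R$.

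\emph{Reduction to 1D.} Writing $\mu(r)\,dr:=C(d)\rho(r)r^{d-1}\,dr$ for the radial part of $\rho$, polar integration yields
\begin{equation*}
\int_{\R^d}\frac{\rho(x)}{|x-T(x)|}\,dx \;=\; \int_0^\infty \frac{\mu(r)}{|r-g(r)|}\,dr,
\end{equation*}
while the marginal condition $T_\#\rho=\rho$ becomes $(|g|)_\#\mu=\mu$. A pointwise swap argument then shows that the optimal $g$ is non-positive: if $g(r)>0$ on a set of positive $\mu$-measure, replacing $g$ by $-g$ on that set preserves the marginal (because $\rho$ is radial, so the angular push is irrelevant) but strictly increases $|r-g(r)|$, strictly decreasing the cost. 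Setting $h(r):=-g(r)\geq 0$, the problem reduces to minimizing $\int_0^\infty \mu(r)/(r+h(r))\,dr$ over self-maps $h$ of $\mu$.

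\emph{One-dimensional rearrangement.} The reduced cost $c_1(r,h)=1/(r+h)$ is strictly supermodular on $(0,\infty)^2$ since $\partial_r\partial_h c_1=2/(r+h)^3>0$. By the classical Spence--Mirrlees theory in dimension one, the unique optimal coupling of $\mu$ with itself for such a supermodular cost is the antimonotone rearrangement, characterized by the mass-balance identity $\mu([0,r])=\mu([h(r),\infty))$, that is $F_1(r)=F_2(h(r))$. Inverting gives $h(r)=F_2^{-1}(F_1(r))$ and hence $g(r)=-F_2^{-1}(F_1(r))$, yielding $f^\star(x)=\frac{x}{|x|}g(|x|)$ as claimed.

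\emph{Main obstacle.} The principal difficulty is the singularity of the Coulomb cost on the diagonal: existence and lower semicontinuity in the Kantorovich formulation, well-posedness of the symmetrization, and the rigorous application of the supermodularity argument all require care. In particular, one needs $F_1$ and $F_2$ to be invertible and the antimonotone coupling to be induced by a map rather than just a plan, which rests on mild nondegeneracy of $\rho$ (no atoms and positive density on the support). These technical points are handled in \cite{CoFrieKl}.
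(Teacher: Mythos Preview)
The paper does not prove this theorem; it is merely recalled from \cite{CoFrieKl} (see the sentence ``we recall the following theorem from \cite{CoFrieKl}'' immediately preceding the statement), with no argument given. There is therefore no proof in the present paper against which to compare your proposal.

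For what it is worth, your outline is essentially the strategy of the original reference: exploit $O(d)$-invariance to force a radial structure, use a reflection/swap argument to show the optimal radial profile must be non-positive, and then identify the decreasing rearrangement via the one-dimensional supermodularity of $(r,h)\mapsto 1/(r+h)$. Your sketch is sound at the level of a plan, and you correctly flag the genuine technical issues (the diagonal singularity of the Coulomb cost and the regularity needed on $\rho$ so that $F_1,F_2$ are invertible and the antimonotone coupling is induced by a map).

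One logical point is worth tightening. In your symmetrization step you average an optimal \emph{plan} over $O(d)$ to obtain an invariant optimal plan, but then immediately assume the optimizer is an equivariant \emph{map} $T$ in order to deduce $T(x)=\frac{x}{|x|}g(|x|)$. An invariant plan need not be concentrated on a graph, so this passage is not justified as written. The cleaner route (and the one taken in \cite{CoFrieKl}) is to first reduce the Kantorovich problem to the one-dimensional radial problem at the level of plans, then use the strict supermodularity of the reduced cost to conclude that the unique one-dimensional optimizer is the antimonotone map, and finally lift this map back to $\R^d$. This is a reordering of your ingredients rather than a genuine gap.
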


\begin{example}{(Spherical coordinates system)}
If $\rho$ is radially symmetric $\rho(x)=\rho(\vert x\vert)$, it is convenient to work in spherical coordinates and then to set for every radius $r>0$
\begin{equation}\label{deflambda}
\lambda(r)=C(d) r^{d-1} \rho(r)
\end{equation}
so that for every test-function $\varphi$ we have
\[\int_{\R^d} \varphi(x) \rho(\vert x \vert) dx=\int_0^{+\infty} \Big(\int_{S^{d-1}} \varphi(r ,\omega) \frac{ d \sigma(\omega)}{C_d} \Big) \lambda(r) dr\]
with $C(d)$ the measure of $S^{d-1}$ and $\sigma$ the $d-1$ measure on $S^{d-1}$ which in particular implies that $\lambda:=\vert. \vert_\# \rho$ i.e.
\begin{equation}\label{deflambda2}
\int_{\R^d} \varphi(\vert x\vert ) \rho(\vert x \vert) dx=\int_0^{+\infty} \varphi(r) \lambda(r) dr, \; \forall \varphi\in C_c(\R_+).
\end{equation}
The radial part of the optimal co-motion function $a(r)=-g(r)$ can be computed by solving the ordinary differential equation
\begin{center}
 $a'(r)\lambda(a(r))=\lambda(r)$
\end{center}
which gives
\begin{equation}
\label{eqDiff}
 \int_{0}^{a(r)}\lambda(s)ds=2-\int_{0}^{r}\lambda(s)ds.
\end{equation}
We define $R(r)=\int_{0}^{r}\lambda(s)ds$, since $r\mapsto R(r)$ is increasing, its inverse $R^{-1}(w)$ is well defined for $w\in[0,1)$.
Thus, we see that $a(r)$ has the form
\begin{equation}
 a(r)=R^{-1}(2-R(r)).
\end{equation}

\end{example}

\subsubsection{Reducing the dimension under radial symmetry}
\label{reducedPB}

In the case where the marginal $\rho(x)=\rho(\vert x\vert)$ is radially symmetric, the multi-marginal problem with Coulomb cost
\begin{equation}\label{dftquant} 
\inf_{\gamma \in \Pi(\rho, \cdots, \rho)} \int_{\R^{dN}} c(x_1, \cdots, x_N) d \gamma(x_1, \cdots, x_N)
\end{equation}
with $c$ the Coulomb cost given by (\ref{eq7}) involves plans on $\R^{dN}$ which is very costly to discretize. Fortunately, thanks to the symmetries of the problem, it can actually be solved by considering a multi-marginal problem only on $\R_+^N$. Let us indeed define for every $(r_1, \cdots, r_N)\in (0, +\infty)^N$:
\begin{equation}\label{defctil}
\tilc (r_1, \cdots, r_N):=\inf \{c(x_1, \cdots, x_N)   \; : \;  \vert x_1 \vert=r_1, \cdots,  \vert x_N \vert=r_N\}.
\end{equation}
Defining $\lambda$ by (\ref{deflambda}) (or  equivalently (\ref{deflambda2})) and defining $\Pi(\lambda, \cdots, \lambda)$ as the set of probability measures on $\R_+^N$ having each marginal equal to $\lambda$, consider 
\begin{equation}\label{dftquantrad} 
\inf_{\tilg\in \Pi(\lambda, \cdots, \lambda)} \int_{\R_+^{N}} \tilc(r_1, \cdots, r_N) d \tilg(r_1, \cdots, r_N).
\end{equation}
We claim that $\inf (\ref{dftquant})=\inf (\ref{dftquantrad})$. The inequality $\inf (\ref{dftquant})\ge\inf (\ref{dftquantrad})$ is easy: take $\gamma \in \Pi(\rho, \cdots, \rho)$ and define its \emph{radial component}  $\tilg$ by
\begin{equation}\label{radcomp}
 \int_{\R_+^{N}} F(r_1, \cdots, r_N) d \tilg(r_1, \cdots, r_N):=\int_{\R^{dN}} F(\vert x_1\vert, \cdots, \vert x_N\vert)d \gamma(x_1, \cdots, x_N), \; \forall F\in C_c(\R_+^{N}),
\end{equation}
it is obvious that $\tilg\in \Pi(\lambda, \cdots, \lambda)$ and since $c(x_1, \cdots, x_N)\ge \tilc(\vert x_1\vert, \cdots, \vert x_N\vert)$, the inequality $\inf (\ref{dftquant})\ge\inf (\ref{dftquantrad})$ easily follows. To show the converse inequality, we  use duality. Indeed, by standard convex duality, we have
\begin{equation}\label{dualdft}
\inf (\ref{dftquant})=\sup_{u} \Big\{ N \int_{\R^d} u(x) \rho(x) dx \; : \; \sum_{i=1}^N u(x_i)  \le  c(x_1, \cdots, x_N) \Big\}
\end{equation}
and similarly
\begin{equation}\label{dualdftrad}
\inf (\ref{dftquantrad})=\sup_{v } \Big\{ N \int_{\R_+} v(r) \lambda(r) dr \; : \; \sum_{i=1}^N v(r_i)  \le  \tilc(r_1, \cdots, r_N) \Big\}.
\end{equation}
Now since $\rho$ is radially symmetric and the constraint of (\ref{dualdft}) is invariant by changing $u$ by $u\circ R$ with $R$ a rotation (see (\ref{eq7})) , there is no loss of generality in restricting the maximization in (\ref{dualdft}) to potentials of the form $u(x_i)=w(r_i)$, but then the constraint  of (\ref{dualdft})  implies that $w$ satisfies the constraint of (\ref{dualdftrad}). Then we have $\inf (\ref{dftquant})=   \sup (\ref{dualdft}) \le \sup (\ref{dualdftrad})=\inf (\ref{dftquantrad})$. Note then that $\gamma\in  \Pi(\rho, \cdots, \rho)$ solves (\ref{dftquant}) if and only if its radial component $\tilg$ solves (\ref{dftquant}) and $c(x_1, \cdots, x_N)=\tilc(\vert x_1\vert, \cdots, \vert x_N\vert)$ $\gamma$-a.e. Therefore (\ref{dftquant}) gives the optimal radial component, whereas the extra condition $c(x_1, \cdots, x_N)=\tilc(\vert x_1\vert, \cdots, \vert x_N\vert)$ $\gamma$-a.e.  gives an information on the angular distribution of $\gamma$.







\section{Iterative Bregman Projections}
\label{AlProj}

Numerics for multi-marginal problems have so far not been extensively developed. Discretizing the multi-marginal problem leads to the linear program (\ref{eq21}) where
the number of constraints grows exponentially in $N$, the number of marginals.
In this section, we present a numerical method which is not based on linear programming techniques, but on an entropic regularization and the so-called alternate projection method. It has  recently been  applied to various optimal transport problems in \cite{CuturiSink} and \cite{iterative}.

The initial idea goes back to  von Neumann \cite{Ne1}, \cite{Ne2} who proved that the sequence obtained by projecting orthogonally iteratively onto two affine subspaces converges to the projection of the initial point onto the intersection  of these affine subspaces. Since the seminal work of Bregman \cite{bregman}, it is by now well-known that one can extend this idea not only to several affine subspaces (the extension to convex sets is due to Dyskstra but we won't use it in the sequel) but also by replacing the euclidean distance by a general Bregman divergence associated to some suitable strictly and differentiable convex function $f$ (possibly with a domain) where we recall that the Bregman divergence associated with $f$ is given by
 \begin{equation}
  D_f(x,y)=f(x)-f(y)-\langle \nabla f(y),x-y\rangle.
 \end{equation}
In what follows, we shall only consider the Bregman divergence (also known as the Kullback-Leibler distance) associated to the Boltzmann/Shannon entropy $f(x):=\sum_i x_i (\log x_i-1)$ for non-negative $x_i$.  This  Bregman divergence (restricted to probabilities i.e. imposing the normalization $\sum_i x_i =1$) is the Kullback-Leibler  distance or relative entropy:
\[D_f(x,y)=\sum_i x_i \log\Big(\frac{ x_i}{y_i} \Big).\]

Bregman distances are used in many other applications most notably image processing, see \cite{osher} for instance. 

\subsection{The Discrete Problem and its Entropic Regularization}
In this section we introduce the discrete problem solved  using the iterative Bregman projections \cite{bregman}.
From now on, we consider the problem (\ref{kant}) 
\begin{equation}
\label{eq13}
 \min_{\gamma_{N}\in\mathcal{C}}\int_{(\mathbb{R}^{d})^N}c(x_{1},\cdots,x_{N})\gamma_{N}(x_{1},\cdots,x_{N})dx_{1}\cdots dx_{N},
\end{equation}
where $N$ is the number of marginals (or electrons), $c(x_{1},...,x_{N})$ is the Coulomb cost, $\gamma_{N}$ the transport plan,  is the probability 
distribution over   $(\mathbb{R}^d)^N$ and 
$\mathcal{C}:=\bigcap_{i=1}^{N}\mathcal{C}_{i}$ with $\mathcal{C}_{i}:=\{ \gamma_N \in Prob\{( \mathbb{R}^d)^N\} \lvert \, \pi_{i}\gamma_N =\rho \}$ (we remind the reader that electrons are indistinguishable so the $N$ marginals coincide with $\rho$).

In order to discretize (\ref{eq13}), we use a discretisation with $M_d$ points 
of the support of the $k$th  electron density as $\{x_{j_k}\}_{j_k=1,\cdots,M_d}$. If the densities  
 $\rho$ are  approximated  by $\sum_{j_k} \rho_{j_k} \delta_{x_{j_k}}$, we get  
\begin{equation}
\label{eq21}
 \min_{\gamma\in\mathcal{C}}\sum_{j_1,\cdots j_{N}} c_{j_1,\cdots,j_N}\gamma_{j_1,\cdots,j_N},
\end{equation}
where    $c_{j_1,\cdots,j_N} = c(x_{j_{1}},\cdots,x_{j_{N}})$  and   the transport plan support for each coordinate is restricted to the points  
$\{x_{j_k}\}_k=1,\cdots,M_d$ thus becoming a $(M_d)^N$ matrix again denoted $\gamma$ with elements 
 $\gamma_{j_1,\cdots,j_N}$. The marginal constraints  $\mathcal{C}_{i}$ becomes 
  \begin{equation}
\label{constraint}
 \mathcal{C}_{i}:=\{ \gamma \in\mathbb{R}_+^{  (M_d)^N }\lvert\quad\sum_{j_{1},...,j_{i-1},j_{i+1},...,j_{N}}\gamma_{j_{1},...,j_{N}}=\rho_{j_i} ,  \,  \forall j_i = 1,\cdots,M_d  \}.
\end{equation}
Recall that the electrons are indistinguishable, meaning that they have same densities~: $ \rho_{j_k}  = \rho_{j_{k'}}, \, \forall j , \, \forall k\ne k' $. 

The discrete optimal transport problem (\ref{eq21}) is  a linear program problem and is dual to  the discretization of (\ref{eq8})
\begin{equation}
 \label{DualDiscrete}
 \begin{split}
  \max_{u_{j}}&\sum_{j=1}^M Nu_{j}\rho_{j}\\
  s.t. & \sum_{i=1}^{N}u_{j_i} \leq c_{j_{1}\cdots j_{N}}\quad\forall j_{i}=1,\cdots,M_d
 \end{split}
 \end{equation}
where $u_{j}=u_{j_{i}}=u(x_{j_i})$.
Thus the primal (\ref{eq21}) has $(M_d)^N$ unknown and $M_d\times N$ linear constraints  and the dual  (\ref{DualDiscrete})  only 
$M_d$ unknown but still $(M_d)^N$ constraints. 
They are computationally not solvable with standard linear programming methods even for small cases in the multi-marginal case. \\

A different approach consists in computing the problem (\ref{eq21}) regularized by the entropy of the joint coupling. This regularization dates to E. Schr\"{o}dinger \cite{Schrodinger31} and it has been recently introduced in machine 
learning \cite{CuturiSink} and economics \cite{GalichonEntr} (we refer the reader to \cite{iterative} for an overview of the entropic regularization and the iterative Bregman projections in OT).
Thus, we consider the following discrete regularized problem
\begin{equation}
 \min_{\gamma\in\mathcal{C}}\sum_{j_1,\cdots j_{N}}c_{j_1,\cdots,j_N}\gamma_{j_1,\cdots,j_N}+\epsilon E(\gamma),
\end{equation}
where $E(\gamma)$ is defined as follows
\begin{equation}
 E(\gamma)=\begin{cases} \sum_{j_1,\cdots j_{N}}\gamma_{j_1,\cdots,j_N}\log(\gamma_{j_1,\cdots,j_N})\mbox{ if }  \gamma\geq 0 \\ +\infty \mbox{ otherwise}.\end{cases}
\end{equation}
After elementary computations, we can re-write the problem as
\begin{equation}
 \label{eq22}
 \min_{\gamma\in \mathcal{C}}KL(\gamma\lvert\bar{\gamma})
\end{equation}
where $KL(\gamma\lvert\bar{\gamma})=\sum_{i_{1},...,i_{N}}\gamma_{i_{1},...,i_{N}}(\log(\dfrac{\gamma_{i_{1},...,i_{N}}}{\bar{\gamma}_{i_{1},...,i{N}}}))$ is the
Kullback-Leibler distance and 
\begin{equation} 
\label{barg} 
\bar{\gamma}_{i_{1},...,i_{N}}=e^{-\dfrac{c_{j_1,\cdots,j_N}}{\epsilon}}.
\end{equation} 

As  explained in section \ref{OT}, when 
the transport plan $\gamma$ is  concentrated  on the graph of a transport map which solves the Monge problem,  after discretisation of the densities, this property is lost along but we still expect the $\gamma$ matrix to be  sparse.
The entropic regularization will  spread the  support and this helps to stabilize the computation:
it defines a strongly convex program with a unique solution $\gamma^{\epsilon}$ which can be obtained through elementary operations (we detail this in section \ref{IPFP}
for both the continuous and discrete framework). The regularized solutions $\gamma^{\epsilon}$ then converge to $\gamma^{\star}$, the solution of (\ref{eq21}) with minimal entropy, as $\epsilon\rightarrow 0$
(see \cite{CominettiAsympt} for a detailed asymptotic analysis and the proof of exponential convergence). Let us now apply the iterative Bregman projections to find the minimizer of (\ref{eq22}).

\subsection{Alternate Projections}

The main idea of the iterative Bregman projections (we call it \textit{Bregman} as the Kullback-Leibler distance is also  called Bregman distance, see \cite{bregman}) is to construct a sequence $\gamma^n$ (which converges to the minimizer of (\ref{eq22})) by alternately projecting on each set $\mathcal{C}_i$
with respect to the Kullback-Leibler distance.
Thus, the iterative KL (or Bregman) projections can be written
\begin{equation}
 \label{eq15}
 \begin{cases}
  \gamma^{0}&=\bar{\gamma} \\
  \gamma^{n}&=P_{\mathcal{C}_{n}}^{KL}(\gamma^{n-1})\quad \forall n>0
 \end{cases}
\end{equation}
where we have extended the indexing of the set by $N-$periodicity such that $\mathcal{C}_{n+N}=\mathcal{C}_{n}\quad\forall n\in\mathbb{N}$ and $P_{\mathcal{C}_{n}}^{KL}$ denotes the KL projection on $\mathcal{C}_n$. \\

The convergence of $\gamma^{n}$ to the  unique solution of (\ref{eq22})  is well known, it actually holds for large classes of Bregman distances and in particular the Kullback-Leibler divergence  as was proved by Bauschke and Lewis \cite{bauschke-lewis}
\begin{center}
 $\gamma^{n}\rightarrow P_{\mathcal{C}}^{KL}(\bar{\gamma})$ as $n\rightarrow\infty$.
\end{center}
\begin{remark}
If the convex sets $\mathcal{C}_i$ are not affine sub-sets (that is not our case), $\gamma^{n}$ converges toward a point of the intersection which is not the KL projection of $\bar{\gamma}$ anymore
so that a correction term is needed as provided by Dykstra's algorithm (we refer the reader to \cite{iterative}).
\end{remark}
The KL projection on $\mathcal{C}_{i}\quad i=1,...,N$ can be computed  explicitly as detailed in the following proposition
\begin{proposition}
 For $\bar{\gamma}\in (\mathbb{R}_+)^{{M_d}^N}$ the projection $P_{\mathcal{C}_{i}}^{KL}(\bar{\gamma})$ is given by
 \begin{equation}
 \label{eq18}
  P_{\mathcal{C}_{i}}^{KL}(\bar{\gamma})_{j_{1},...,j_{N}}=\rho_{j_{i}}\dfrac{\bar{\gamma}_{j_{1},...,j_{N}}}{\sum_{k_{1},...,k_{i-1},k_{i+1},...,k_{N}}\bar{\gamma}_{k_{1},...,k_{N}}}\quad\forall j_{i}=1,...,M_d.
 \end{equation}
\end{proposition}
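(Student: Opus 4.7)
The plan is to characterize the KL projection via Lagrange multipliers and use strict convexity to conclude uniqueness. By definition,
$$P_{\mathcal{C}_i}^{KL}(\bar\gamma)\ =\ \arg\min_{\gamma\in\mathcal{C}_i}\ KL(\gamma\,|\,\bar\gamma)\ =\ \arg\min_{\gamma\in\mathcal{C}_i}\ \sum_{j_1,\ldots,j_N} \gamma_{j_1,\ldots,j_N}\log\!\Big(\tfrac{\gamma_{j_1,\ldots,j_N}}{\bar\gamma_{j_1,\ldots,j_N}}\Big).$$
The objective is strictly convex on the positive orthant (which is its effective domain), $\mathcal{C}_i$ is a non-empty affine subset (the scaled uniform coupling belongs to it since $\rho$ is a probability vector), and the objective is coercive on $\mathcal{C}_i$ with respect to the $\ell^1$ norm, so the minimizer exists and is unique.

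First, I would introduce a multiplier $\lambda_{j_i}\in\mathbb{R}$ for each of the $M_d$ scalar marginal constraints and write the Lagrangian
$$\mathcal{L}(\gamma,\lambda)=\sum_{j_1,\ldots,j_N}\gamma_{j_1,\ldots,j_N}\log\!\Big(\tfrac{\gamma_{j_1,\ldots,j_N}}{\bar\gamma_{j_1,\ldots,j_N}}\Big)\ -\ \sum_{j_i}\lambda_{j_i}\Big(\sum_{k_\ell:\,\ell\ne i}\gamma_{k_1,\ldots,k_{i-1},j_i,k_{i+1},\ldots,k_N}-\rho_{j_i}\Big).$$
Stationarity with respect to $\gamma_{j_1,\ldots,j_N}$ gives $\log(\gamma_{j_1,\ldots,j_N}/\bar\gamma_{j_1,\ldots,j_N})+1-\lambda_{j_i}=0$, so
$$\gamma_{j_1,\ldots,j_N}=a_{j_i}\,\bar\gamma_{j_1,\ldots,j_N},\qquad a_{j_i}:=e^{\lambda_{j_i}-1}>0.$$
The crucial structural observation is that the multiplier depends only on the $i$-th index: the optimal $\gamma$ is obtained from $\bar\gamma$ by a rescaling along the $i$-th fibers.

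Next, I would reinsert this ansatz into the marginal constraint and solve for $a_{j_i}$:
$$a_{j_i}\sum_{k_\ell:\,\ell\ne i}\bar\gamma_{k_1,\ldots,k_{i-1},j_i,k_{i+1},\ldots,k_N}=\rho_{j_i},$$
which directly yields $a_{j_i}=\rho_{j_i}/\sum_{k_\ell:\,\ell\ne i}\bar\gamma_{k_1,\ldots,k_{i-1},j_i,k_{i+1},\ldots,k_N}$, and substituting back gives exactly the claimed formula. Strict convexity of $KL(\cdot\,|\,\bar\gamma)$ together with affineness of $\mathcal{C}_i$ makes these first-order conditions sufficient, so this candidate is the projection.

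The only mild subtlety is the handling of zero entries of $\bar\gamma$: with the usual convention $0\log(0/0)=0$ and $x\log(x/0)=+\infty$ for $x>0$, the objective forces $\gamma_{j_1,\ldots,j_N}=0$ whenever $\bar\gamma_{j_1,\ldots,j_N}=0$, and the entire calculation above is performed on the support of $\bar\gamma$ (with $0/0$ interpreted as $0$ in the final formula as well). Provided the restricted problem remains feasible, the formula is unchanged. Beyond this bookkeeping I do not foresee any serious obstacle: the proposition is in essence a one-line Lagrange-multiplier computation whose content is that KL-projecting onto an $i$-th marginal constraint amounts to renormalizing the $i$-th fibers of $\bar\gamma$ so that they match $\rho$.
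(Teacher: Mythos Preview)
Your proof is correct and follows essentially the same Lagrange-multiplier route as the paper: introduce a multiplier for each of the $M_d$ scalar constraints, observe that stationarity forces $\gamma_{j_1,\ldots,j_N}=a_{j_i}\bar\gamma_{j_1,\ldots,j_N}$ with $a_{j_i}$ depending only on the $i$-th index, and determine $a_{j_i}$ from the marginal constraint. Your version is in fact slightly more careful than the paper's (you justify existence/uniqueness via strict convexity and address the zero-entry convention), but the argument is the same.
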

\begin{proof}
Introducing Lagrange multipliers $\lambda_{j_{i}}$ associated to the constraint $\mathcal{C}_{i}$ 
 \begin{equation}
 \label{eq16}
  \sum_{j_{1},...,j_{i-1},j_{i+1},...,j_{N}}\gamma_{j_{1},...,j_{N}}=\rho_{j_{i}}
 \end{equation}
the KL projection is given by the optimality condition~:
\begin{equation}
 \log(\dfrac{\gamma_{j_{1},...,j_{N}}}{\bar{\gamma}_{j_{1},...,j_{N}}})-\lambda_{j_{i}}=0
\end{equation}
so that
\begin{equation}
 \label{eq17}
 \gamma_{j_{1},...,j_{N}}=C_{j_{i}}\bar{\gamma}_{j_{1},...,j_{N}},
\end{equation}
where $C_{j_{i}}=e^{\lambda_{j_{i}}}$.
If we substitute (\ref{eq17}) in (\ref{eq16}), we get
\begin{equation}
 C_{j_{i}}=\rho_{j_{i}}\dfrac{1}{\sum_{k_{1},...,k_{i-1},k_{i+1},...,k_{N}}\bar{\gamma}_{k_{1},...,k_{N}}}
\end{equation}
which gives (\ref{eq18}).
\end{proof}

\subsection{From the Alternate Projections to the Iterative Proportional  Fitting Procedure }
\label{IPFP}

The alternate projection procedure (\ref{eq15}) is  performed on $M_d^N$ matrices. 
Moreover each projection (\ref{eq18}) involves computing partial sum of this matrix.  The total 
operation cost of each Bregman iteration  scales like $O( M_d^{2N-1})$. \\

In order to reduce the cost of the  problem, we use  an equivalent formulation of the Bregman algorithm known as the Iterative Proportional Fitting Procedure (IPFP).
Let us consider the problem (\ref{eq22}) in a continous measure setting and, for simplicity, 2-marginals framework
\begin{equation}
\label{eq19}
 \min_{\{\gamma\lvert \pi_{1}(\gamma)=\rho,\pi_{2}(\gamma)=\rho\}}\int\log(\dfrac{d\gamma}{d\bar{\gamma}})d\gamma, 
\end{equation}
where $\rho$, $\rho$ and $\bar{\gamma}$ are nonnegative measures.
The aim of the IPFP is to find the KL projection of $\bar{\gamma}$ on $\Pi(\rho,\rho)$ 
(see (\ref{barg}) for the definition of $\bar{\gamma}$  which depends on the cost function). 

Under the assumption  that the value of $(\ref{eq19})$ is finite, R\"{u}schendorf and  Thomsen (see \cite{RuschendorfThomsen}) proved that a unique KL-projection $\gamma^{*}$ exists and that it is of the form
\begin{equation}
 \gamma^{*}(x,y)=a(x)b(y)\bar{\gamma}(x,y),\quad a(x)\geq 0,\quad b(y)\geq 0. 
\end{equation}
From now on, we consider (with a sligthly abuse of notation) Borel measures with densities $\gamma$, $\bar{\gamma}$, $\rho$ and $\rho$ w.r.t. the suitable Lebesgue measure.
$a$ and $b$ can be uniquely determined by the marginal condition as follows
\begin{equation}
\begin{array}{ll}
a(x)&=\dfrac{\rho(x)}{\int\bar{\gamma}(x,y)b(y)dy},\\
b(y)&=\dfrac{\rho(y)}{\int\bar{\gamma}(x,y)a(x)dx}.
\end{array}
\end{equation}
Then,  IPFP is defined by the following recursion
\begin{center}
 $b_{0}=1,\quad a_{0}=\rho$,
\end{center}
\begin{equation}
\label{ip} 
 \begin{array}{ll}
  b_{n+1}(y)&=\dfrac{\rho(y)}{\int\bar{\gamma}(x,y)a_{n}(x)dx},\\
  a_{n+1}(x)&=\dfrac{\rho(x)}{\int\bar{\gamma}(x,y)b_{n+1}(y)dy}.
 \end{array}
\end{equation}
Moreover, we can define the sequence of joint densities (and of the corresponding measures)
\begin{equation}
 \gamma^{2n}(x,y):=a^{n}(x)b^{n}(y)\bar{\gamma}(x,y)\quad\gamma^{2n+1}:=a^{n}(x)b^{n+1}(y)\bar{\gamma}(x,y),\quad n\geq 0. 
\end{equation}
R\"{u}schendorf proved (see \cite{Ruschendorf95}) that $\gamma^{n}$ converges to the KL-projection of $\bar{\gamma}$.
We can, now, recast the IPFP in a discrete framework, which reads as 
\begin{alignat}{3}
&\gamma_{ij}=a_{i}b_{j}\bar{\gamma}_{ij}, \;
& b_{j}^{0}=1,\quad a_{i}^{0}=\rho_{i},
\end{alignat}
\begin{equation}
\label{ip2} 
 \begin{array}{ll}
  b_{j}^{n+1}&=\dfrac{\rho_{j}}{\sum_{i}\bar{\gamma}_{ij}a_{i}^{n}},\\
  a_{i}^{n+1}&=\dfrac{\rho_{i}}{\sum_{j}\bar{\gamma}_{ij}b_{j}^{n+1}},
\end{array}
\end{equation}
\begin{equation}
\label{eq20}
 \gamma_{ij}^{2n}=a_{i}^{n}\bar{\gamma}_{ij}b_{j}^{n}\quad\gamma_{ij}^{2n+1}=a_{i}^{n}\bar{\gamma}_{ij}b_{j}^{n+1}.
\end{equation}
By definition of $\gamma_{ij}^{n}$,  notice that
\begin{center}
$\bar{\gamma}_{ij}b_{j}^{n}=\dfrac{\gamma_{ij}^{2n-1}}{a_{i}^{n-1}}$ and $a_{i}^{n}\bar{\gamma}_{ij}=\dfrac{\gamma_{ij}^{2n}}{b_{j}^{n}}$
\end{center}
and if (\ref{eq20}) is re-written as follows
\begin{equation}
 \begin{array}{ll}
  \gamma_{ij}^{2n}&=\rho_{i}\dfrac{\bar{\gamma}_{ij}b_{j}^{n}}{\sum_{k}\bar{\gamma}_{ik}b_{k}^{n}}\\
  \gamma_{ij}^{2n+1}&=\rho_{j}\dfrac{\bar{\gamma}_{ij}a_{i}^{n}}{\sum_{k}\bar{\gamma}_{kj}a_{k}^{n}} 
 \end{array}
\end{equation}
then we obtain
\begin{equation}
 \begin{array}{ll}
  \gamma_{ij}^{2n}&=\rho_{i}\dfrac{\gamma_{ij}^{2n-1}}{\sum_{k}\gamma_{ik}^{2n-1}}\\
  \gamma_{ij}^{2n+1}&=\rho_{j}\dfrac{\gamma_{ij}^{2n}}{\sum_{k}\gamma_{kj}^{2n}}.
 \end{array}
\end{equation}
Thus, we exactly recover the Bregman algorithm described in the previous section, for 2 marginals. \\

The extension to the multi-marginal framework  is straightforward but 
cumbersone to write. 
It leads to a problem set on $N$  $M_d$-dimensional vectors $a_{j,i_{(\cdot)}},\quad \, j = 1,\cdots,N, \quad i_{(\cdot)}= 1,\cdots,M_d $. Each update takes the form 
\begin{equation}
\label{ip3} 
  a_{j,i_{j}}^{n+1}  =\dfrac{\rho_{i_{j}}}{\sum_{i_1,i_2,... i_{j-1},i_{j+1},...,i_N   }\bar{\gamma}_{i_1,...,i_N} \,
  a_{1,i_1}^{n+1}   \,    a_{2,i_2}^{n+1}  ...   a_{j-1,i_{j-1}}^{n+1}   \,   a_{j+1,i_{j+1}}^{n}   ...   a_{N,{i_N}}^{n}   \,         }, 
\end{equation} 

Where each $i_k$ takes values in $\{1,\cdots,M_d\}$.\\

Note that we still need  a  constant $M_d^N$ cost matrix $\bar{\gamma}$.   Thanks to the symmetry and separability properties of the cost function (see (\ref{eq7}) and (\ref{barg})) , it is possible to 
replace it by a $N\,(N-1)/2$ product of  $M_d^2$ matrices. This is already a big improvement from 
the storage point of view.
Further simplifications are under investigations but the brute force 
IPFP operational cost  therefore scales like $O( N \,  M_d^{N+1} )$ which provides a small improvement over the 
Bregman iterates option. 

\subsection{A heuristic refinement mesh strategy}

We will use a  heuristic refinement mesh strategy allowing to obtain  more accuracy 
without increasing the computational cost and memory requirements. 
This idea was introduced  in \cite{oberman}  for the adaptative resolution of the 
pure Linear Programming formulation of the Optimal Transportation problem, i.e without
the entropic regularisation. \\

If the optimal transport plan is supported by a lower dimensional set, we expect  the entropic regularisation to be concentrated on a  
 mollified version of  this set. Its width should decrease with  the entropic parameter $\epsilon$ if the discretisation is 
fine enough.  Working with a fixed $\epsilon$,  the  idea is to apply coarse to fine progressive resolution and work with a sparse matrix $\gamma$.
At each level,  values below a threshold  are filtered out (set to 0), then new positive values 
are interpolated on a finer grid (next level) where $\gamma$ is strictly positive.  \\

To simplify the exposition, we describe the algorithm for  $2-$marginals in $1D$ and take a $\sqrt{M}$ gridpoints discretization of $I=[a,b]\in\mathbb{R}$:
\begin{enumerate}
 \item we start with a cartesian  $M$ gridpoints mesh on  $I\times I$ to approximate transport plan $\gamma^{\epsilon}$, obtained by running the IPFP on a coarse grid. 

\item  we take $m_c(j)=max_{i}\gamma^{\epsilon}_{ij}$ and $m_r(i)=max_{j}\gamma^{\epsilon}_{ij}$ which are the maximum values over the rows and over the columns respectively, 
and we define 
\begin{center}
 $m=\min[\min_j(m_c(j)), \min_i(m_r(i))]$.
\end{center} 
We will refine the grid only inside the level curve $  \gamma^{\epsilon}  = \xi m$ where we expect the 
finer solution is supported,  see figure  \ref{figure:RegionR}.


\item In order to keep approximately the same number of element in the sparse matrix $\gamma$ at each level 
we refine the grid as follows~:  
Let $ \mathcal{T}:=\{ (i,j) \lvert \gamma^{\epsilon}_{ij} \geq \xi m \}$ and 
$M_{\mathcal{T}}:=\sharp\mathcal{T}$ and $r:=M_{\mathcal{T}}/M$, then the size of the grid at the 
next level is $M^{new}=M/r$.  \\

\item We  compute the interpolation $\gamma_{M^{new}}$ of the old transport plan $\gamma_{M}$  on the finer grid.
\item Elements  of $\gamma_{M^{new}}$ below the fixed threshold $\xi m$  are filtered out, i.e are fixed to $0$ and are not used in the IPFP sum computations, see figure  \ref{figure:RegionR}. 
\item Finally, a new IPFP computation is performed and it can be initialised with an interpolation of the data at the previous level ($\bar{\gamma}$ can be easly re-computed on the gridpoints where $\gamma_{M^{new}}$ is strictly positive). 
\end{enumerate}



\begin{figure}[htbp]

\begin{tabular}{@{}c@{\hspace{1mm}}c@{\hspace{1mm}}c@{}}

\centering
\includegraphics[ scale=0.12]{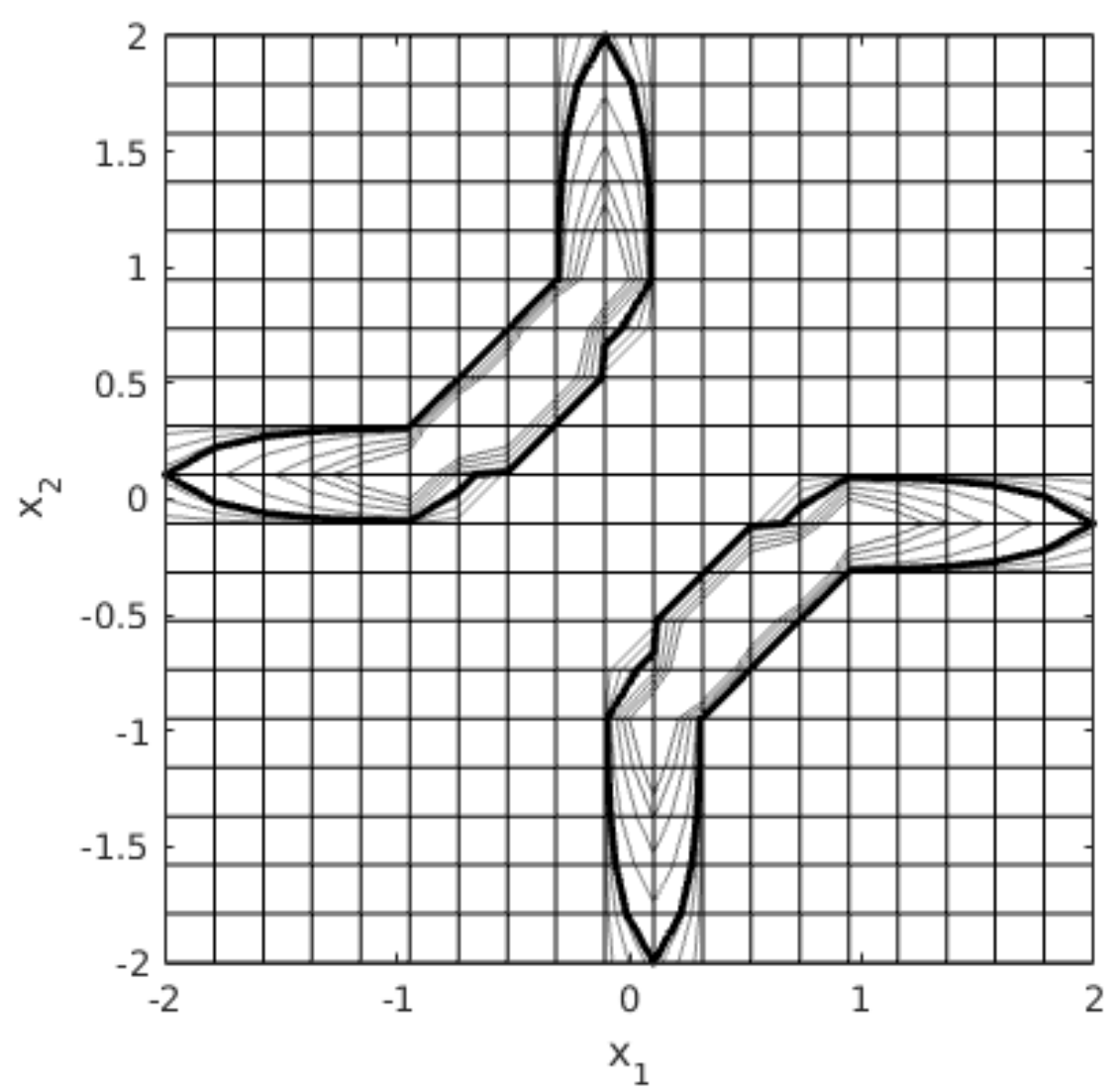}&
\includegraphics[ scale=0.12]{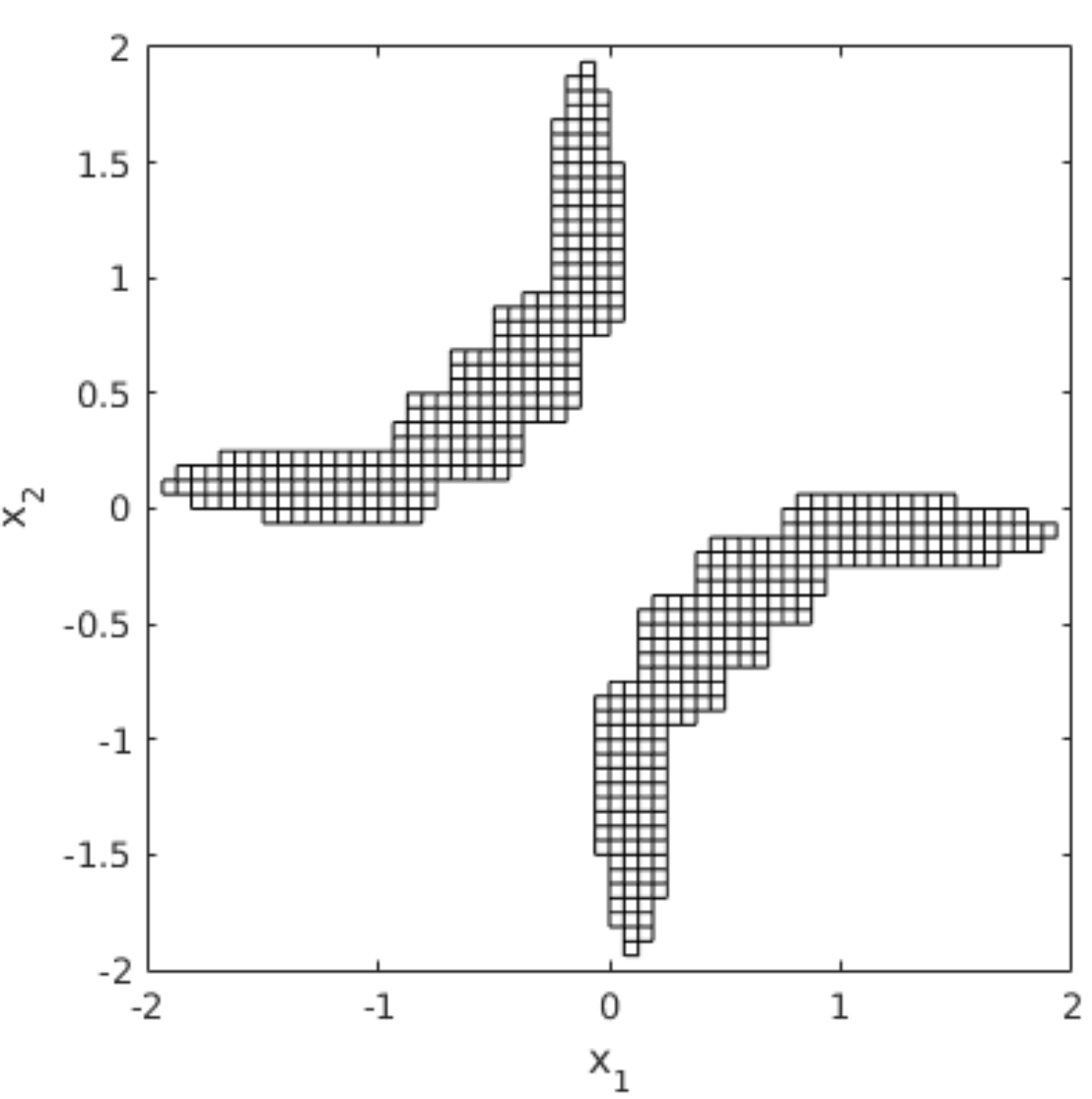}&
\includegraphics[ scale=0.123]{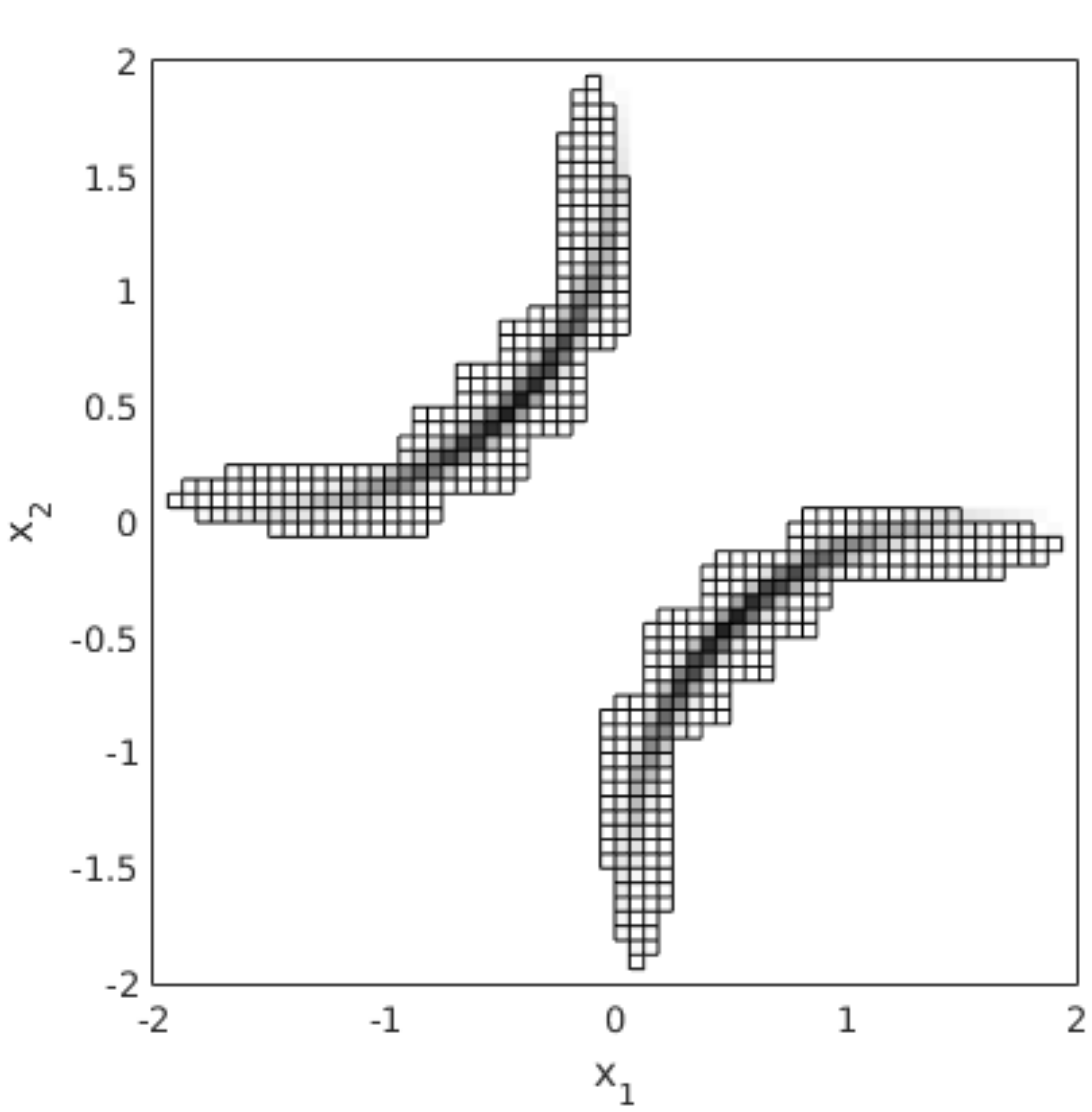}\\

\end{tabular}
\caption{\textit{Left:   $\mathcal{T}$ is the set of grid points inside the level curve  $ \gamma = \xi \, m $ ($\xi=0.9$) (the bold line curve).
Center: The new grid after the refinement. Right: The transport Plan after a new IPFP computation}}
\label{figure:RegionR}
\end{figure}







\section{Numerical Results}
\label{NumRes}


\subsection{$N = 2$ electrons: comparison between numerical and analytical results}
In order to validate the numerical method, we now compare some numerical results for $2$ electrons in dimension $d=1,\cdots,3$ with the analytical results from section \ref{anal}.
Let us first consider a uniform density (as (\ref{eq9}) with $a=2$) in $1D$.
In table \ref{tab:tb1}, we analyze the performance of the numerical method by varying the parameter $\epsilon$.
We  notice that the error becomes smaller by decreasing the regularizing parameter, but the drawback is that the method needs more iterations to converge.
Figure \ref{fig:uniform1D} shows the Kantorovich potential, the co-motion function which can be recovered from the potential by using (\ref{equi}) and the transport plan.
The simulation is performed with a discretization of (\ref{eq9}) with $a=2$, $M=1000$ (gridpoints) and $\epsilon=0.004$.\\

As explained in section \ref{radial}, we can also compute the co-motion for a radially symmetric density.
We have tested the method in $2D$ and $3D$, figure \ref{fig:Uniform2D} and \ref{fig:Uniform3D} respectively,
by using the normalized uniform density on the unit ball.
Moreover, in the radial case we have proved that the OT problem can be reduced to a $1-$dimensional problem by computing $\tilc$ which is trivial for the $2$ electrons
case: let us set the problem in $2D$ in polar coordinates $(r_{1},\theta_{1})$ and $(r_{2},\theta_{2})$, for the first and the second electron respectively (without loss of generality we can set $\theta_{1}=0$), then
it is easy to verify that the minimum is achieved with $\theta_{2}=\pi$.
Figure \ref{fig:Uniform2D} shows the Kantorovich potential (the radial component $v(r)$ as defined in section \ref{reducedPB}), the co-motion and the transport plan for the $2-$dimensional case, the simulation is
performed with $M=1000$ and $\epsilon=0.002$.
In figure \ref{fig:Uniform3D} we present the result for th $3-$dimensional case, the simulation  is
performed with $M=1000$ and $\epsilon=0.002$.

\begin{remark}
 One can notice that, in the case of a uniform density, the transport plan presents a concentration of mass on the boundaries. This is 
 a combined effect of the regularization and of the fact that the density has a compact support.
 \end{remark}

\begin{table}[ht]

 \centering

\begin{tabular}{|c| c| c| c| }

\hline\hline 
$\epsilon$ & Error ($\lVert u^{\epsilon}-u\rVert_{\infty} / \lVert u \rVert_{\infty}$)  & Iteration  \\ [0.5ex]
\hline
0.256 & 0.1529 & 11  \\ \hline
0.128 & 0.0984 & 16 \\ \hline
0.064 & 0.0578 & 25  \\ \hline
0.032 & 0.0313 & 38  \\ \hline
0.016 & 0.0151 & 66 \\ \hline
0.008 & 0.0049 & 114  \\ \hline
0.004 & 0.0045 & 192  \\ \hline

\hline

\end{tabular}
\caption{\textit{Numerical results for uniform density in 1D. $u^{\epsilon}$ is the numerical Kantorovich potential and $u$ is the analytical one.}}
\label{tab:tb1}
\end{table} 

\begin{figure}[ht]

\begin{tabular}{@{}c@{\hspace{1mm}}c@{}}

\centering
\includegraphics[ scale=0.35]{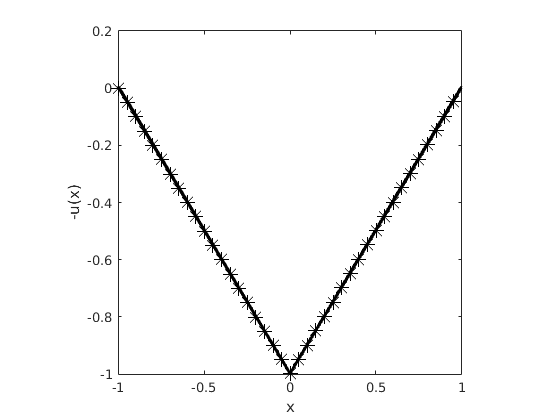}&
\includegraphics[ scale=0.35]{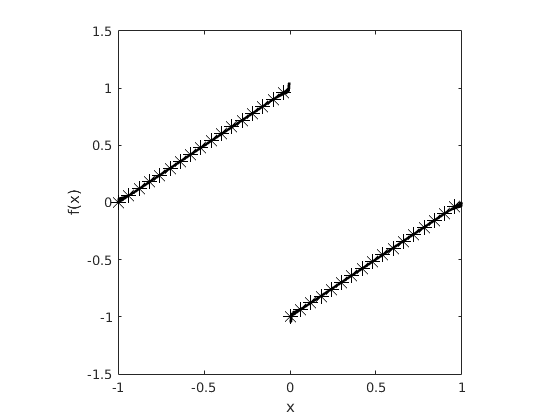} \\
\includegraphics[ scale=0.35]{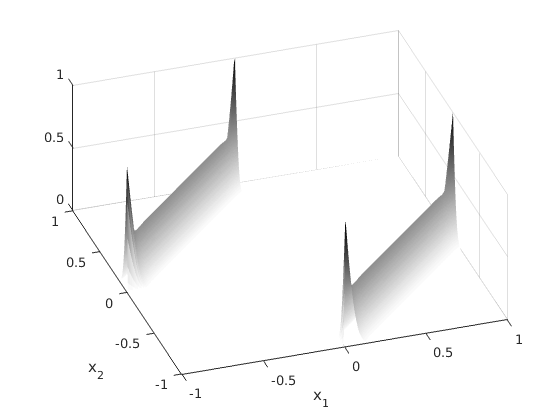} &
\includegraphics[ scale=0.35]{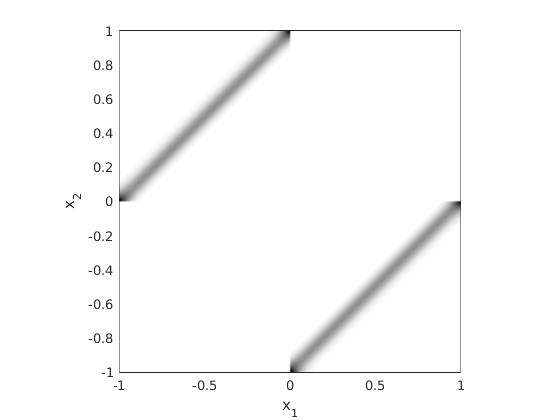}\\
\end{tabular}
\caption{\textit{Top-Left: Kantorovich Potential $u(x)$. Top-Right: Numerical co-motion function (solid line) and analytical co-motion (star-solid line) .  
Bottom-Left: Transport plan $\tilde{\gamma}$. Bottom-Right: Support of $\tilde{\gamma}$.}}
\label{fig:uniform1D}
\end{figure}

\begin{figure}[ht]
\begin{tabular}{@{}c@{\hspace{1mm}}c@{}}
\centering
\includegraphics[ scale=0.35]{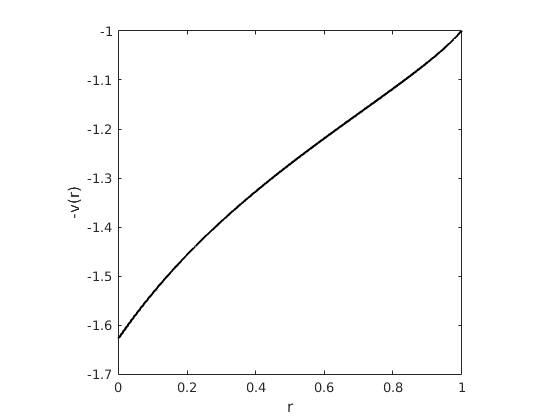} &
\includegraphics[ scale=0.35]{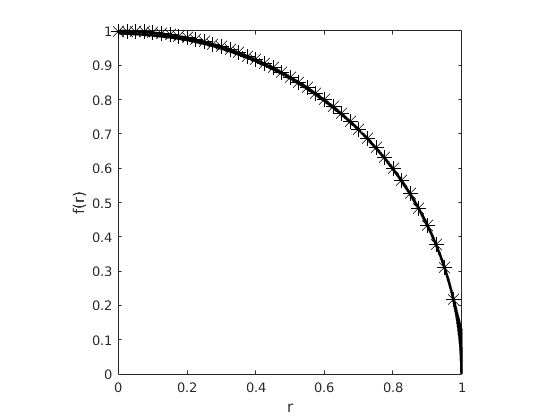}\\
\includegraphics[ scale=0.35]{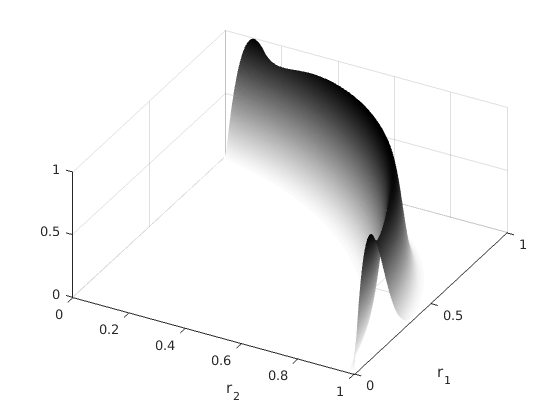} &
\includegraphics[ scale=0.35]{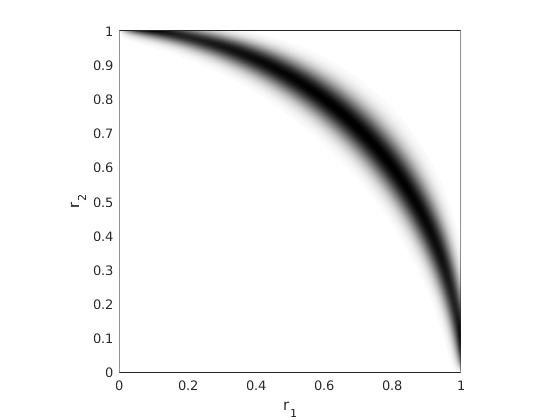}\\

\end{tabular}
\caption{\textit{Top-Left: Kantorovich Potential $v(r)$. Top-Right: Numerical co-motion function (solid line) and analytical co-motion (star-solid line) .  
Bottom-Left: Transport plan $\tilde{\gamma}$. Bottom-Right: Support of $\tilde{\gamma}$.}}
\label{fig:Uniform2D}
\end{figure}

\begin{figure}[ht]
\begin{tabular}{@{}c@{\hspace{1mm}}c@{}}
\centering
\includegraphics[ scale=0.35]{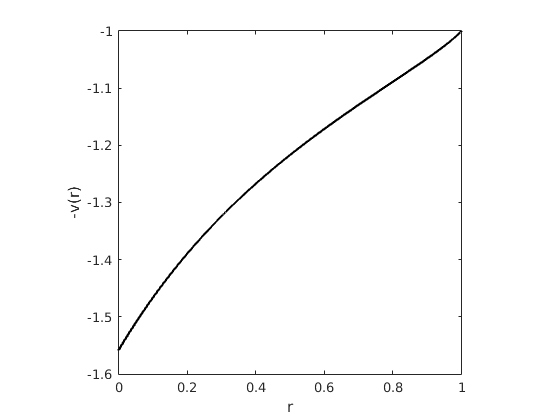} &
\includegraphics[ scale=0.35]{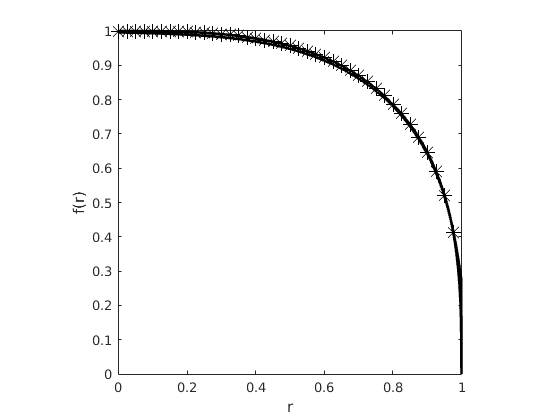}\\
\includegraphics[ scale=0.35]{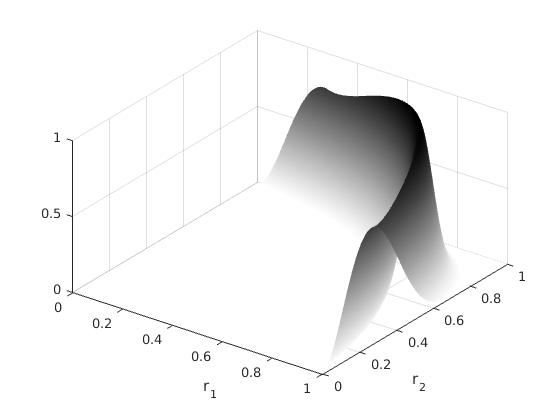} &
\includegraphics[ scale=0.35]{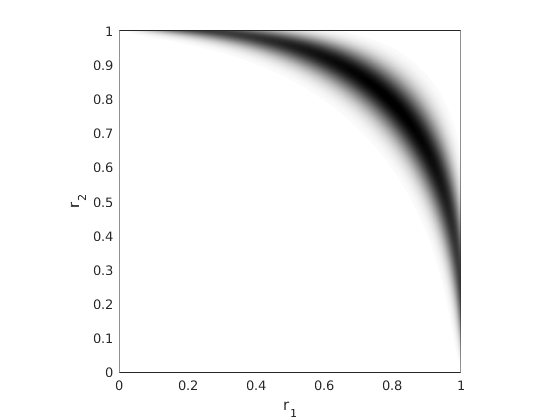}\\

\end{tabular}
\caption{\textit{Top-Left: Kantorovich Potential $v(r)$. Top-Right: Numerical co-motion function (solid line) and analytical co-motion (star-solid line) .  
Bottom-Left: Transport plan $\tilde{\gamma}$. Bottom-Right: Support of $\tilde{\gamma}$.}}
\label{fig:Uniform3D}
\end{figure}

\subsection{$N = 2$ electrons in dimension $d = 3$~:  Helium atom}
Once we have validated the method with some analytical examples, we solve the regularized problem for the Helium atom by using the electron density computed
in \cite{Helium}.
In figure \ref{fig:helium}, we present the electron density, the Kantorovich potential and the transport plan. The simulation is performed
with a discretization of $[0,4]$ with $M=1000$ and $\epsilon=5\, 10^{-3}$.
We can notice the potential correctly fits the asymptotic behaviour from \cite{SeidlGoriSavin}, namely $v(r)\sim \dfrac{N-1}{\lvert r \rvert}$
for $r \rightarrow \infty$, where $N$ is the number of electrons. \begin{figure}[ht]
\begin{tabular}{@{}c@{\hspace{1mm}}c@{}}
\centering
\includegraphics[ scale=0.35]{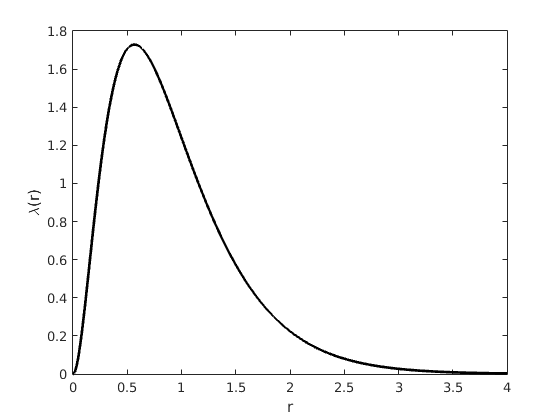} &
\includegraphics[ scale=0.35]{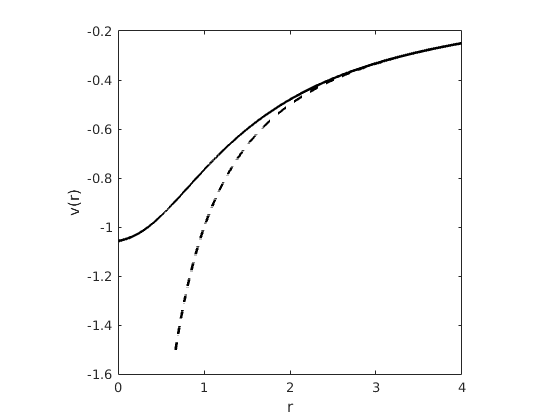}\\
\includegraphics[ scale=0.2]{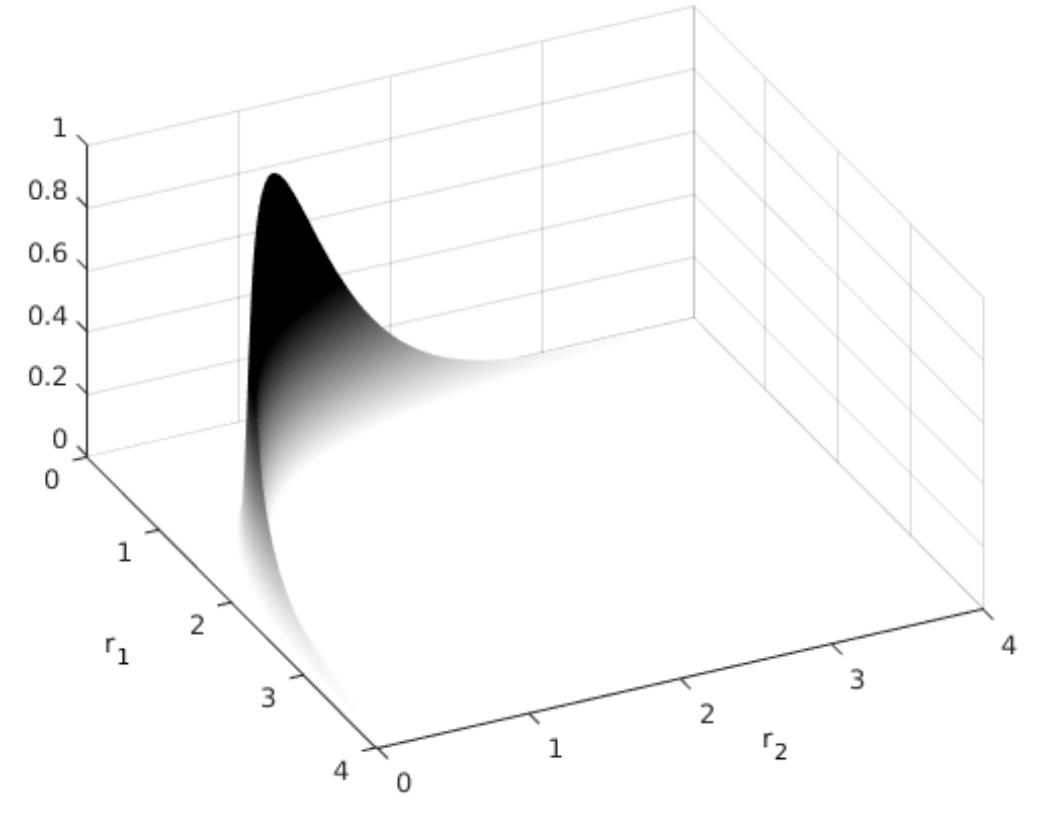} &
\includegraphics[ scale=0.35]{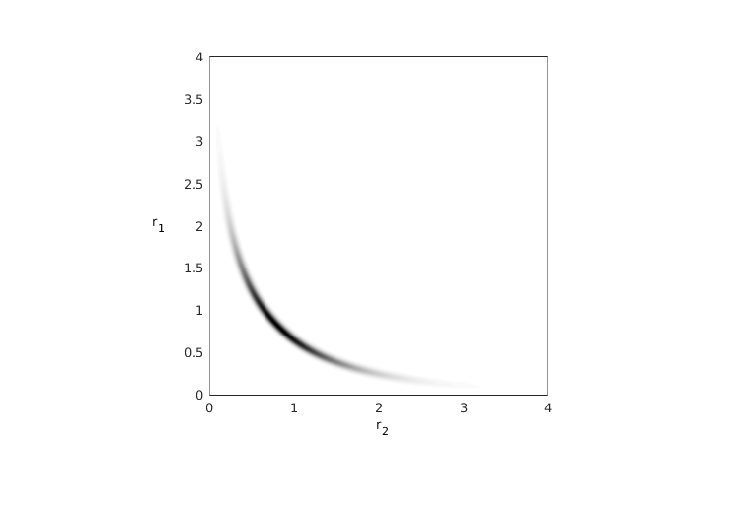}\\

\end{tabular}
\caption{\textit{Top-Left: Helium density $\lambda(r)=4\pi r^{2}\rho(r)$. Top-Right: Kantorovich Potential $v(r)$ (blue) and asymptotic behaviour (red) 
$v(r)\sim\frac{1}{r}\quad r\rightarrow \infty$. Bottom-Left: Transport plan $\tilde{\gamma}$. Bottom-Right: Support of $\tilde{\gamma}$. All quantities are in Hartree atomic units.}}
\label{fig:helium}
\end{figure}

\subsection{$N = 3$ electrons in dimension  $d = 1$ }

We present now some results for the $1-$dimensional multi-marginal problem with $N=3$. They are validated against 
the analytical solutions  given in 
 section \ref{multi1D}. We recall that 
splitting  $\rho$ into three tertiles $\rho_{i}$ with equal mass, we will have  $\rho_{1}\rightarrow\rho_{2}$, $\rho_{2}\rightarrow\rho_{3}$ and
$\rho_{3}\rightarrow\rho_{1}$. \\

In table \ref{tb2}, we present the perfomance of the method for a uniform density on $[0,1]$ by varying $\epsilon$ and, as expected, we see the same behaviour as in the $2$ marginals case.
Figure \ref{fig:3Uniform} shows the Kantorovich potential and the projection of the transport plan onto two marginals (namely $\gamma^{2}=\pi_{12}(\gamma^{\epsilon})$).  The support gives the relative positions of
two electrons.  \\

The simulation is performed on a discretization of $[0,1]$ with a uniform density, $M=1000$ and $\epsilon=0.02$.
If we focus on the support of the projected transport plan we can notice that the numerical solution correctly reproduces the prescribed behavior
The concentration of mass is again due to the compact support of the density, which is not the case of the gaussian as one can see in figure \ref{fig:3Gaussians}.
In figure \ref{fig:3Gaussians} we present the numerical results for $\rho(x)=e^{-x^2}/\sqrt{\pi}$. The simulation is performed on the discretization of $[-2.5,2.5]$
with $M=1000$ and $\epsilon=0.008$.

\begin{table}[ht]

 \centering

\begin{tabular}{|c| c| c| }

\hline\hline 
$\epsilon$ & Error ($\lVert u^{\epsilon}-u\rVert_{\infty} / \lVert u \rVert_{\infty}$)  & Iteration   \\ [0.5ex]
\hline
0.32 & 0.0658 & 121   \\ \hline
0.16 & 0.0373 & 230   \\ \hline
0.08 & 0.0198 & 446   \\ \hline
0.04 & 0.0097 & 878   \\ \hline
0.02 & 0.0040 & 1714   \\ \hline

\hline

\end{tabular}
\caption{\textit{Numerical results for uniform density in 1D and three electrons. $u^{\epsilon}$ is the numerical Kantorovich potential and $u$ is the analytical one.}}
\label{tb2}
\end{table} 

\begin{figure}[ht]
\begin{tabular}{@{}c@{\hspace{3mm}}c@{}}
\centering
\includegraphics[ scale=0.25]{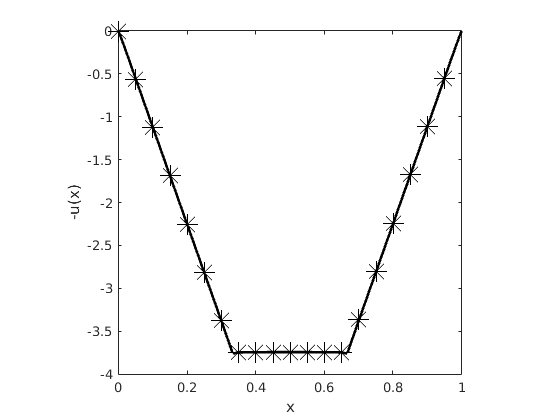}
\includegraphics[ scale=0.25]{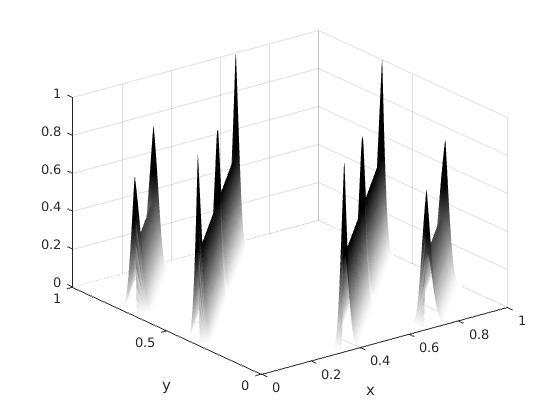} &
\includegraphics[ scale=0.25]{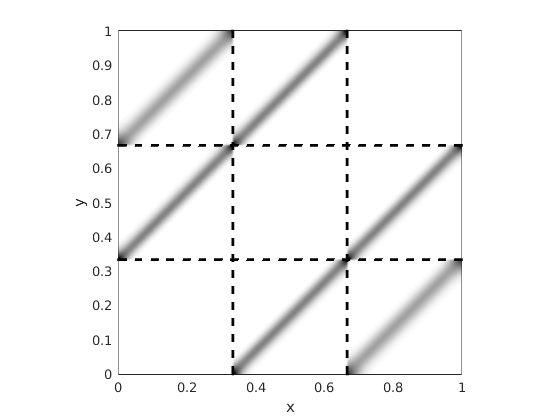}\\
\end{tabular}
\caption{\textit{Left: Numerical Kantorovich potential $u(x)$ (solid line) and analytical potential (star-solid line). Center: Projection of the transport plan $\pi_{12}(\gamma(x,y,z))$. Rigth: Support of $\pi_{12}(\gamma(x,y,z))$
The dot-dashed lines delimit the intervals where 
$\rho_{i}$, with $i=1,\cdots,3$, are defined.}}
\label{fig:3Uniform}
\end{figure}

\begin{figure}[htbp]
\begin{tabular}{@{}c@{\hspace{3mm}}c@{}c@{\hspace{3mm}}c@{}}
\centering
\includegraphics[ scale=0.25]{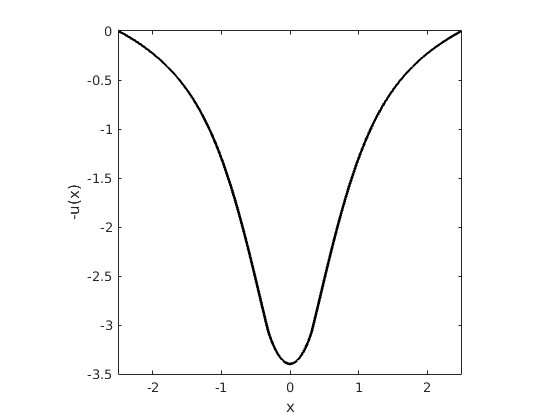}
\includegraphics[ scale=0.25]{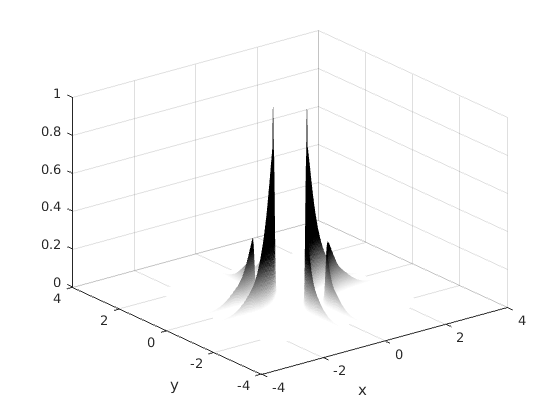} &
\includegraphics[ scale=0.25]{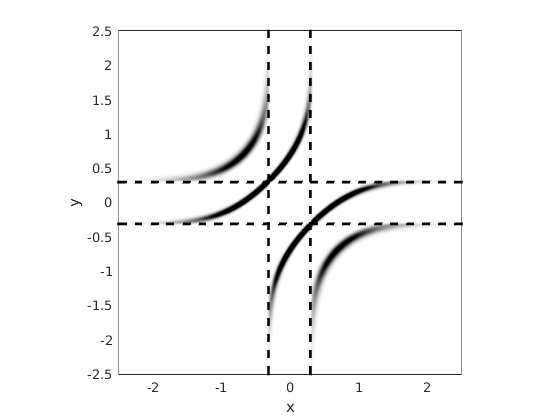}\\
\end{tabular}
\caption{\textit{Left: Kantorovich potential $u(x)$. Center: Projection of the transport plan $\pi_{12}(\gamma(x,y,z))$. Rigth: Support of $\pi_{12}(\gamma(x,y,z))$. The dot-dashed lines delimit the intervals where 
$\rho_{i}$, with $i=1,\cdots,3$, are defined.}}
\label{fig:3Gaussians}
\end{figure}

\subsection{$N = 3$ electrons in dimension  $d = 3$ radial case~:  Litium atom}
We finally perform some simulations for the radial $3-$dimensional case for $N=3$.
As for the $3-$dimensional case with $2$ marginals  we solve the reduced problem: let us consider the spherical coordinates $(r_{i},\theta_{i},\phi_{i})$
with $i=1,\cdots,3$ and we fix $\theta_1=0$ and $\phi_1=\phi_2=0$ (the first electrons defines the z axis and the second one is on the xz plane). 
We then notice that  $\phi_3=0$ as the electrons must be on the same plane of the nucleus to achieve compensation of forces (one can see it
by computing the optimality conditions), so we have to minimize on $\theta_2$ and $\theta_3$ in order to obtain $\tilc$.\\

Figure \ref{fig:litium} shows the electron density of the Litium (computed in \cite{litium}), the Kantorovich Potential (and the asymptotic behavior) and the
projection of the transport plan onto two marginals $\tilde{\gamma}^{2}=\pi_{12}(\tilde{\gamma}^{\epsilon})$.  The support gives  the relative positions of
two electrons.  \\

The simulation is performed on a discretization of $[0,8]$  with $M=300$ and $\epsilon=0.007$.
Our results show (taking into account the regularization effect) a concentrated transport plan for this kind of density and they match analogous result
obtained in \cite{SeidlGoriSavin}. If we focus on the support of the transport plan we can notice that the optimal solution forces the electrons to occupy
three different regions as conjectured in \cite{SeidlGoriSavin}.

\begin{figure}[ht]
\begin{tabular}{@{}c@{\hspace{1mm}}c@{}}
\centering
\includegraphics[ scale=0.35]{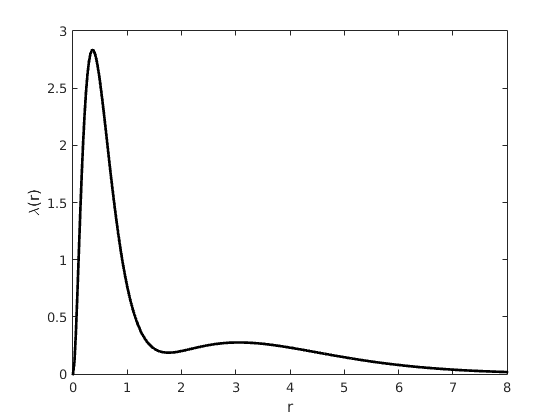} &
\includegraphics[ scale=0.35]{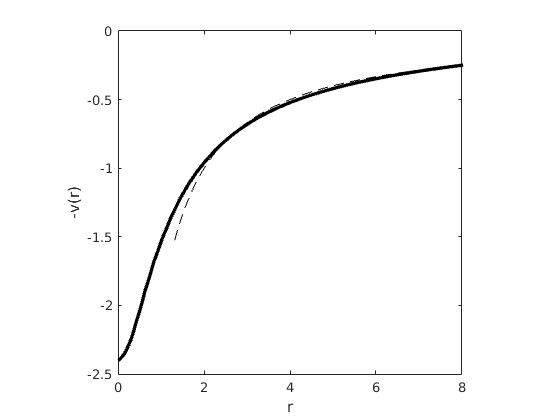}\\
\includegraphics[ scale=0.35]{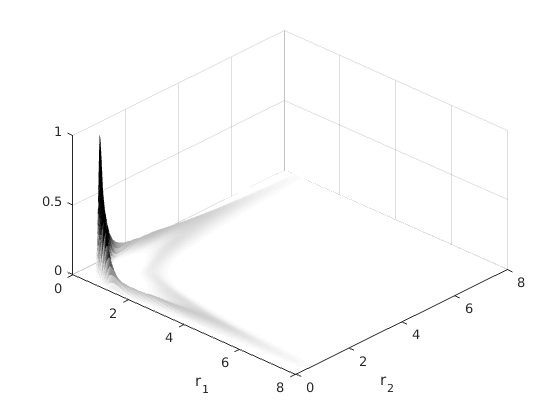} &
\includegraphics[ scale=0.35]{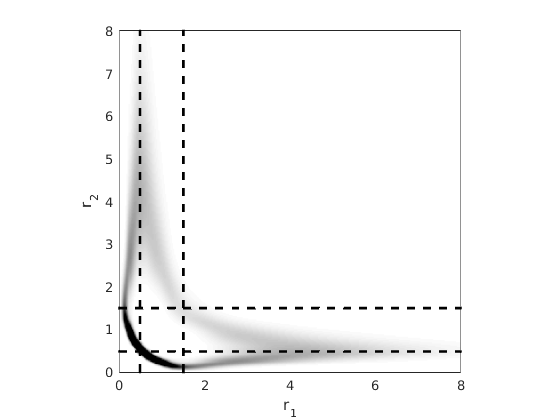}\\

\end{tabular}
\caption{\textit{Top-Left: Litium density $\lambda(r)=4\pi r^{2}\rho(r)$. Top-Right: Kantorovich Potential $v(r)$ (blue) and asymptotic behaviour (red) 
$v(r)\sim\frac{2}{r}\quad r\rightarrow \infty$. Bottom-Left: Projection of the Transport plan $\tilde{\gamma}^{2}=\pi_{12}(\tilde{\gamma}^{\epsilon})$.
Bottom-Right: Support of the projected transport plan $\tilde{\gamma}^{2}$. The dot-dashed lines delimit the three regions that the electrons must occupy, we computed them
numerically following the idea in \cite{SeidlGoriSavin}.All quantities are in Hartree atomic units.}}
\label{fig:litium}
\end{figure}

\section{conclusion}\label{ccl}

We have presented a numerical scheme for solving multi-marginal OT problems arising from DFT. This is  a challenging problems, not only because of the unusual features of the Coulomb cost which is singular and repulsive but also due to the high dimension of the space of plans. 

Using an entropic regularization gives rise to a Kullback-Leibler projection problem onto the intersection of affine subsets given by the marginal constraints.  Because each projection is explicit, one can use Bregman's iterative projection algorithm to approximate the solution. 

The power of such an iterative projection approach was recently emphasized in \cite{CuturiSink, iterative} for the entropic regularization of optimal transport problems, we showed that is also well suited to treat the multi-marginal OT problem with Coulomb cost and leads to the 
same benefits in terms of convexification of the problem and simplicity of implemention. 

The method presented here is just a preliminary step which is simple to implement and therefore easy to use in practice. 
The cost of solving the general DFT problem in dimension 3 for a large number of electrons is still unfeasible and we need to 
use radial symmetry simplification and also a heuristic refinement mesh strategy.

A lot of questions  are left for future research~: can IPFP be used for sharper approximations for DFT? 
Can one justify rigorously and quantitatively the mesh refinement strategy?
How should the regularization parameter $\varepsilon$ be chosen in practice? Does the entropic regularization have a physical interpretation?

\section*{Acknowledgements}

We would like to thank Adam Oberman and Brendan Pass for many helpful and stimulating discussions as well as
Paola Gori-Giorgi for sharing numerical details concerning the Helium and Litium atom.

We gratefully acknowledge the support of the ANR, through the project ISOTACE (ANR-12-MONU-0013) and INRIA through the ``action exploratoire" MOKAPLAN.

\bibliographystyle{spmpsci}
\bibliography{refs}

\end{document}